\documentclass[12pt]{amsart}
\usepackage{amssymb,latexsym}
\usepackage{enumerate}
\usepackage[cp1250]{inputenc}

\makeatletter
\@namedef{subjclassname@2010}{%
  \textup{2010} Mathematics Subject Classification}
\makeatother

\newtheorem{thm}{Theorem}[section]
\newtheorem{corollary}[thm]{Corollary}
\newtheorem{lemma}[thm]{Lemma}
\newtheorem{proposition}[thm]{Proposition}

\theoremstyle{definition}
\newtheorem{definition}[thm]{Definition}
\newtheorem{remark}[thm]{Remark}

\numberwithin{equation}{section}

\frenchspacing

\textwidth=13.5cm
\textheight=23cm
\parindent=16pt
\oddsidemargin=-0.5cm
\evensidemargin=-0.5cm
\topmargin=-0.5cm

\begin{document}

\baselineskip=17pt

\title[Additivity of the ideal of microscopic sets]{Additivity of the ideal of microscopic sets}

\author[A. Kwela]{Adam Kwela}
\address{Institute of Mathematics, University of Gda\'{n}sk, ul.~Wita Stwosza 57, 80-952 Gda\'{n}sk, Poland}
\email{Adam.Kwela@ug.edu.pl}

\begin{abstract}
A set $M\subset\mathbb{R}$ is microscopic if for each $\varepsilon>0$ there is
a sequence of intervals $(J_n)_{n\in\omega}$ covering $M$ and such that $|J_n|\leq \varepsilon^{n+1}$ for each $n\in\omega$. We show that there is a microscopic set which cannot be covered by a sequence $(J_n)_{n\in\omega}$ with $\{n\in\omega:J_n\neq\emptyset\}$ of lower asymptotic density zero. We prove (in ZFC) that additivity of the ideal of microscopic sets is $\omega_1$. This solves a problem of G. Horbaczewska. Finally, we discuss additivity of some generalizations of this ideal.
\end{abstract}

\keywords{additivity, microscopic sets, asymptotic density, cardinal coefficients, sets of strong measure zero}

\maketitle

\section{Introduction}

For $n\in\omega$ we use the identification $n=\{0,1,\ldots,n-1\}$. By $\textrm{card}(A)$ we denote cardinality of a set $A$. For an interval $I\subset\mathbb{R}$ by $|I|$ we denote its length. Given $r\in\mathbb{R}$ and $A\subset\mathbb{R}$ we write $r\cdot A=\{ra:a\in A\}$ and $r+A=\{r+a:a\in A\}$.

We say that a sequence of intervals $(J_n)_{n\in\omega}$ \emph{covers the set} $A\subset\mathbb{R}$ if $A\subset\bigcup_{n\in\omega}J_n$.

\begin{definition}[J. Appell, \cite{Appell}]
A set $M\subset\mathbb{R}$ is called \emph{microscopic} if for each $\varepsilon>0$ there exists a sequence of intervals $(J_n)_{n\in\omega}$ covering $M$ and such that $|J_n|\leq \varepsilon^{n+1}$ for each $n\in\omega$. The family of all microscopic sets will be denoted by $\mathcal{M}$.
\end{definition}

This notion was introduced in 2001 by J. Appell in \cite{Appell}. Deeper studies of microscopic sets were done by J. Appell, E. D'Aniello and M. V\"{a}th in \cite{Appell2}. Since that time, several papers were devoted to this subject, including \cite{Micro1} \cite{Micro2} and \cite{Micro3}. In \cite{Monografia} one can find a summary of the progress made in this area. 

It is easy to see that every microscopic set is contained in some microscopic $\mathbf{G_\delta}$-set, i.e., $\mathcal{M}$ is $\mathbf{G_\delta}$-generated (cf. \cite[Theorem 1.1]{Monografia}). Moreover, $\mathcal{M}$ is strictly smaller than the $\sigma$-ideal of sets of Lebesgue measure zero (cf. \cite{Monografia}). Therefore, many classical theorems stating that some property holds everywhere except a set of Lebesgue measure zero, are being strengthened by showing that actually the set of exceptions can be chosen to be microscopic. For instance, it can be proved that $\mathbb{R}$ can be decomposed into two sets such that one of them is of first category and the second one is microscopic (cf. \cite{Micro1}). 

The aim of this paper is to determine the smallest number of sets from $\mathcal{M}$ union of which is not in $\mathcal{M}$ anymore. For this purpose, we need the notion of asymptotic density of a subset of $\omega$.

Recall that for any $A\subset\omega$ its \emph{upper and lower asymptotic density} are given by the formulas:
$$\overline{d}(A)=\limsup_{j\rightarrow\infty}\frac{\textrm{card}(A\cap (j+1))}{j+1},$$
$$\underline{d}(A)=\liminf_{j\rightarrow\infty}\frac{\textrm{card}(A\cap (j+1))}{j+1}.$$
If $\overline{d}(A)=\underline{d}(A)$, then we say that the set $A$ \emph{is of asymptotic density} $d(A)$ which is equal to this common value.

\begin{definition}
Let $\delta\in [0,1]$. We say that a microscopic set $M\subset\mathbb{R}$ \emph{admits a cover of (lower) asymptotic density} $\delta$ if for every $\varepsilon>0$ there is $D\subset\omega$ with $d(D)\leq\delta$ ($\underline{d}(D)\leq\delta$) and a sequence of intervals $(J_d)_{d\in D}$ which covers $M$ and satisfies $|J_d|\leq \varepsilon^{d+1}$ for all $d\in D$.
\end{definition}

\begin{remark}
\label{rem}
It is easy to see that any microscopic set $M\subset\mathbb{R}$ admits a cover of arbitrarily small positive asymptotic density. Actually, for any $k\in\omega$ and $\varepsilon>0$ one can find a sequence of intervals $(J_d)_{d\in D}$, where $D=(k+1)\cdot(\omega+1)$, which covers $M$ and satisfies $|J_d|\leq \varepsilon^{d+1}$ for each $d\in D$.

Indeed, set any $k\in\omega$ and $\varepsilon>0$. Since $M$ is microscopic, there is a sequence of intervals $(J'_n)_{n\in\omega}$ covering $M$ with $|J'_n|\leq \left(\varepsilon^{k+1}\right)^{n+1}=\varepsilon^{(k+1)(n+1)}$ for each $n\in\omega$. Then it suffices to put $J_{(k+1)(n+1)}=J'_n$ for $n\in\omega$.
\end{remark}

In Section \ref{sec:results} we will show that the above cannot be strengthened, i.e., there is a microscopic set which does not admit a cover of lower asymptotic density zero (cf. Theorem \ref{density}).

From Remark \ref{rem} it easily follows that $\mathcal{M}$ is a $\sigma$-ideal (see \cite{Appell2} or \cite{Monografia} for details). Among studies of $\sigma$-ideals, examination of cardinal coefficients related to them has been of great interest during last decades. This is due to the famous Cicho\'{n}'s diagram which classifies cardinal coefficients of the ideals of null sets and meager sets (cf. \cite{Bartoszynski} and \cite{Fremlin}). 

Recall the definitions of \emph{additivity, covering number, uniformity number and cofinality of an ideal} $\mathcal{I}$ of subsets of $\mathbb{R}$:
\begin{align*}
&\texttt{add}\left(\mathcal{I}\right)=\min\left\{\textrm{card}(\mathcal{A}):\quad\mathcal{A}\subset\mathcal{I}\quad\wedge\quad\bigcup\mathcal{A}\notin\mathcal{I}\right\};\cr
&\texttt{cov}\left(\mathcal{I}\right)=\min\left\{\textrm{card}(\mathcal{A}):\quad\mathcal{A}\subset\mathcal{I}\quad\wedge\quad\bigcup\mathcal{A}=\mathbb{R}\right\};\cr
&\texttt{non}\left(\mathcal{I}\right)=\min\left\{\textrm{card}(A):\quad A\subset\mathbb{R}\quad\wedge\quad A\notin\mathcal{I}\right\};\cr
&\texttt{cof}\left(\mathcal{I}\right)=\min\left\{\textrm{card}(\mathcal{B}):\quad\mathcal{B}\subset\mathcal{I}\quad\wedge\quad\forall_{A\in\mathcal{I}}\exists_{B\in\mathcal{B}}A\subset B\right\}.
\end{align*}

One can easily prove the following inequalities:
$$\texttt{add}(\mathcal{I})\leq\texttt{non}(\mathcal{I})\leq\texttt{cof}(\mathcal{I})\quad\textrm{ and }\quad\texttt{add}(\mathcal{I})\leq\texttt{cov}(\mathcal{I})\leq\texttt{cof}(\mathcal{I}).$$
For more on cardinal coefficients see e.g. \cite{Bartoszynski} or \cite{Fremlin}.

For the ideal of microscopic sets each of those cardinal coefficients lies between $\omega_1$ and $2^{\omega}$ (possibly is equal to one of those two numbers), since $\mathcal{M}$ is a $\sigma$-ideal of subsets of $\mathbb{R}$ containing all singletons and $\mathbf{G_\delta}$-generated. The aim of this paper is to determine additivity of the ideal of microscopic sets. This problem was posed in 2010 by G. Horbaczewska in her talk \emph{Properties of the $\sigma$-ideal of microscopic sets} during XXIV Summer Conference on Real Functions Theory in Stara Lesna, Slovakia. 

Firstly, let us discuss the last three coefficients in the case of microscopic sets. By $\mathcal{N}$ we denote the family of sets of Lebesgue measure zero. Recall that a set $S\subset\mathbb{R}$ is \emph{of strong measure zero} if for each sequence of positive reals $(\varepsilon_n)_{n\in\omega}$ there exists
a sequence of intervals $(J_n)_{n\in\omega}$ covering $S$ and such that $|J_n|\leq \varepsilon_n$ for each $n\in\omega$. The family of sets of strong measure zero will be denoted by $\mathcal{S}$. 

It is well known that both $\mathcal{N}$ and $\mathcal{S}$ are $\sigma$-ideals. One can easily see that $\mathcal{S}\subset\mathcal{M}\subset\mathcal{N}$. In fact, both of these inclusions are proper (cf. \cite{Monografia}).

\begin{remark}
Assume Martin's axiom (cf. \cite{Kunen}). Then $2^{\omega}=\texttt{non}(\mathcal{M})=\texttt{cov}(\mathcal{M})=\texttt{cof}(\mathcal{M})$. Indeed, under Martin's axiom $2^{\omega}=\texttt{add}(\mathcal{N})=\texttt{add}(\mathcal{S})$ (cf. \cite[Theorem 2.1]{smz} and \cite[Theorem 2.21]{Kunen}). Since $\mathcal{S}\subset\mathcal{M}\subset\mathcal{N}$, we also have $\texttt{cov}(\mathcal{N})\leq\texttt{cov}(\mathcal{M})$ and $\texttt{non}(\mathcal{S})\leq\texttt{non}(\mathcal{M})$. Hence,
$$2^{\omega}=\texttt{add}(\mathcal{N})\leq\texttt{cov}(\mathcal{N})\leq\texttt{cov}(\mathcal{M})\leq\texttt{cof}(\mathcal{M})\leq 2^{\omega}$$
and
$$2^{\omega}=\texttt{add}(\mathcal{S})\leq\texttt{non}(\mathcal{S})\leq\texttt{non}(\mathcal{M})\leq 2^{\omega}.$$
\end{remark}

Although $\texttt{non}(\mathcal{M}),\texttt{cov}(\mathcal{M})$ and $\texttt{cof}(\mathcal{M})$ may all be equal to $2^{\omega}$, we will prove in Section \ref{sec:results} that $\texttt{add}(\mathcal{M})$ is always equal to $\omega_1$ (cf. Theorem \ref{add}). 

The paper is organized as follows. In Section \ref{sec:algorithm} we deal with a technical construction which will be helpful in further considerations. In Section \ref{sec:results} we use methods developed in Section \ref{sec:algorithm} to construct a microscopic set which does not admit a cover of lower asymptotic density zero and to prove (in ZFC) that additivity of the ideal of microscopic sets is $\omega_1$. Section \ref{sec:generalizations} is devoted to some generalizations of the ideal of microscopic sets and their additivity.

\section{Spacing algorithm}
\label{sec:algorithm}

\begin{definition}
\label{Y}
Given two sequences of intervals $(I_a)_{a\in A}$ and $(J_d)_{d\in D}$ the set $Y\left((I_a)_{a\in A},(J_d)_{d\in D}\right)$ consists of all $a\in A$ with the following property:
$$\forall_{d\in D}\left(I_a\cap J_d\neq\emptyset\Rightarrow\forall_{\genfrac{}{}{0pt}{}{a'\in A}{a\neq a'}}I_{a'}\cap J_d=\emptyset\right).$$
\end{definition}

In the proofs of Theorems \ref{density} and \ref{add} the following technical lemma will be crucial.

\begin{lemma}[Spacing Algorithm]
\label{lemma}
Let $I$ be an interval of length $\frac{1}{7^m}$ for some $m\in\omega$. Suppose that $A\subset\omega$ is of positive density and $\min A>m$. Then one can define a sequence of intervals $(I_a)_{a\in A}$ with $|I_a|=\frac{1}{7^a}$ and $I_a\subset I$ for all $a\in A$, in such a way that given any $D\subset\omega\setminus m$ and a sequence of intervals $(J_d)_{d\in D}$, with $|J_d|\leq\frac{1}{7^{d+1}}$ for all $d\in D$, for any $s\in\omega$ and $r_0,\ldots,r_s\in (0,1)\setminus\mathbb{Q}$ the set $Z$ consisting of those $a\in Y\left((I_a)_{a\in A},(J_d)_{d\in D}\right)$ which additionally satisfy
$$\forall_{d\in D}\left(I_a\cap J_d\neq\emptyset\Rightarrow\forall_{i\leq s}\forall_{a'\in A}(r_i+I_{a'})\cap J_d=\emptyset\right),$$
is of lower asymptotic density at least $\frac{d(A)}{4}$.
\end{lemma}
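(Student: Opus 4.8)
The plan is to build the intervals $(I_a)_{a\in A}$ greedily inside $I$, spacing them out so widely that any single small interval $J_d$ (of length $\le 7^{-(d+1)}$) can meet at most one of them, and moreover cannot simultaneously meet $I_a$ and any rational translate $r_i+I_{a'}$ that is still "dangerous''. First I would enumerate $A=\{a_0<a_1<a_2<\dots\}$ and place $I_{a_k}$ by induction: having placed $I_{a_0},\dots,I_{a_{k-1}}$, I choose $I_{a_k}\subset I$ of length $7^{-a_k}$ so that the distance from $I_{a_k}$ to each previously placed interval $I_{a_j}$ ($j<k$) is at least, say, $2\cdot 7^{-a_j}$ (a "guard zone'' proportional to the larger interval), and similarly the distance from $I_{a_k}$ to each translate $r_i+I_{a_j}$ is at least $2\cdot 7^{-a_j}$. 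Since at stage $k$ only finitely many constraints are active and each forbidden zone has length $O(7^{-a_j})$ with $\sum_j 7^{-a_j}$ summing to something much smaller than $|I|=7^{-m}$ (because $\min A>m$ forces $a_j\ge m+1+j$, so $\sum_j 7^{-a_j}\le 7^{-m-1}\cdot\frac{7}{6}<7^{-m}$), there is always room; this is where the base $7$ and the hypothesis $\min A>m$ are used.

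The key geometric claim is then: if $J_d$ (length $\le 7^{-(d+1)}$) meets $I_a$ for some $a\in A$, then either $d<a$, or $J_d$ is too short to reach out of the guard zone around $I_a$ and so meets no other $I_{a'}$ and no translate $r_i+I_{a'}$. More precisely, if $I_a\cap J_d\ne\emptyset$ and $d\ge a$, then $|J_d|\le 7^{-(a+1)}$, which is smaller than the guard zone $2\cdot 7^{-a}$ separating $I_a$ from everything else, so $a\in Z$ automatically. Hence the only elements of $A$ that can fail to lie in $Z$ are those $a$ for which some $J_d$ with $d<a$ meets $I_a$ (or meets some competing interval forcing $a\notin Y$). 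I would bound the number of such "bad'' $a$ below a given threshold $j$: each $J_d$ with $d<j$ can be "responsible'' for excluding some bounded number of indices $a$; but a cleaner count is that the bad indices are contained in $\{a\in A: \exists d\in D,\ d<a,\ I_a\cap J_d\ne\emptyset\}$, and since the $I_a$ are pairwise disjoint while a fixed $J_d$ meets at most one $I_a$, the number of bad $a$ below $j$ is at most $\mathrm{card}(D\cap j)\le j$. That alone is useless, so the real point is subtler: I must exploit that $J_d$ with small $d$ are long and few, and argue that the density of bad indices is small.

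Here is the density bookkeeping I would actually run. Fix a level $j$. Split the bad indices $a\le j$ according to the witness $d$: if $a\le j$ is bad via some $d$ with $d\ge a$ we already showed this cannot cause failure, so every bad $a$ has a witness $d<a\le j$, i.e. $d\in D\cap j$. Now crucially a single $J_d$ meets at most one $I_a$ (pairwise disjointness of the $I_a$'s), so the map bad-$a\mapsto$ (least witness $d$) is injective into $D\cap j$; but I also lose the indices knocked out by the translate condition, and each translate family $r_i+I_{a'}$ is again pairwise disjoint and a given $J_d$ meets at most one member of it, giving at most $(s+1)$ further exclusions per $d$. Hence $\mathrm{card}\big((A\setminus Z)\cap j\big)\le (s+2)\,\mathrm{card}(D\cap j)$ — still not enough directly. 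The fix is to re-run the whole construction not with guard zones $2\cdot 7^{-a_j}$ but with guard zones chosen so that $J_d$ of length $7^{-(d+1)}$ with $d<a$ can still only reach finitely many — in fact the argument should be organized by first thinning $A$ to a subset $A'\subset A$ of density $\ge d(A)/4$ on which consecutive gaps are so large that each $J_d$ interferes with at most one $a\in A'$ even when $d<a$, absorbing the factor $4$; then $A\setminus Z$ restricted to $A'$ is controlled by $D$, and a Lebesgue-density/measure argument (total length of all $J_d$ meeting $I$ is at most $\sum_d 7^{-(d+1)}$, a geometric tail) shows only a density-zero subset of $A'$ can be hit. I expect the main obstacle to be precisely this last step: converting "the offending $J_d$'s have small total length'' into "the set of hit indices $a$ has small relative density in $A'$'', since an adversary may concentrate all the $J_d$'s near a few $I_a$'s — which is why the spacing must be engineered so that each $J_d$ can be charged to at most one index, after which $\underline d(Z)\ge d(A')\ge d(A)/4$ follows because $D$ itself, being the index set of a cover, need not be small, but the indices it actually hits form an injective image and one argues with liminf along a subsequence where $\mathrm{card}(A'\cap j)/j\to d(A')$.
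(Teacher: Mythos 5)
Your plan has two fundamental problems, one of which you partially acknowledge.

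First, a quantifier error. The lemma requires a \emph{single} family $(I_a)_{a\in A}$, built once and for all, that then works simultaneously for \emph{every} choice of $D$, $(J_d)_{d\in D}$, $s$, and $r_0,\dots,r_s\in(0,1)\setminus\mathbb{Q}$. Your greedy construction places $I_{a_k}$ so as to avoid the translates $r_i+I_{a_j}$ for $j<k$; that requires knowing the $r_i$'s (and $s$) during the construction, which the statement does not permit. (It also only guards against translates of already-placed intervals; nothing protects $I_{a_k}$ from the translates $r_i+I_{a_{k'}}$ with $k'>k$.) The paper sidesteps this entirely: $(I_a)_{a\in A}$ is built with no reference to the $r_i$'s, inside a self-similar hierarchy of blocks $K^i_j$ (length $7^{-(i+m+1)}$, base-$7$ geometry), and the translates are handled \emph{a posteriori} by reading off the base-$7$ expansion of each given $r_i$ to identify, at each scale, a $\tfrac{1}{3}$-fraction of blocks that the translated family cannot touch; iterating over $k$ scales gives the $(1-\delta)$-fraction in the set called $A'$.

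Second, even ignoring the translates, your guard zones cannot carry the density argument, and you sense this yourself. You correctly observe that only the case $d<a$ is dangerous, and then propose to thin $A$ to an $A'$ on which ``each $J_d$ interferes with at most one $a\in A'$ even when $d<a$.'' That is impossible in general: the smallest admissible $d$ is $d=m$, for which $|J_d|$ can be as large as $7^{-(m+1)}=\tfrac{1}{7}|I|$, and a single interval of that length sweeps a constant fraction of $I$; on any positive-density subfamily of intervals inside $I$, such a $J_d$ will meet proportionally many of them. Guard zones of size $O(7^{-a_j})$, tied to the (much smaller) intervals being placed, are powerless against these few large $J_d$'s. What actually closes the bookkeeping in the paper is a scale-by-scale geometric decay: intervals assigned to a given level $L_n$ live in $3^{n+1}$ equal-size leaf-blocks, those blocks are grouped into macroblocks separated by gaps comparable to $|J_{d}|$ for the \emph{largest} admissible $d$, so each $J_{d_k}$ ($k$-th smallest $d$) can meet at most a $3^{-(k+1)}$-fraction of level-$L_n$ blocks, and $\sum_k 3^{-(k+1)}<\tfrac12$. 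The factor $4$ in the conclusion then comes from combining this $\tfrac12$ with the liminf taken over all $j$ (not only the block boundaries), not from a thinning of $A$. So the route you sketch does not recover, and cannot without importing a hierarchical block structure, the key inequality $\underline{d}(Z)\ge d(A)/4$.
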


\begin{proof}
The proof is divided into five parts. At first, we deal with the construction of the intervals $I_a$ for $a\in A$. Then we focus on preliminary discussion concerning calculation of $\underline{d}(Z)$. The last three parts are devoted to some technical aspects of this calculation.

\textbf{Construction of the intervals $I_a$ for $a\in A$.}

Let $\varepsilon=\frac{1}{7}$. Firstly, we construct auxiliary intervals $K_j^i$ for $i\in\omega$ and $j<4\cdot 3^{i}$. Let $K_j^{0}$ for $j<4$ be such that:
\begin{itemize}
	\item each of them is of length $\varepsilon^{m+1}$;
	\item the distance between each two of them is at least $\varepsilon^{m+1}$;
	\item each of them is contained in $I$;
	\item $\inf K_0^{0}=\inf I$ and $\sup K_1^{0}=\sup I$.
\end{itemize}
Suppose that $K_j^i$ for all $i<k$ and $j<4\cdot 3^{i}$ are defined. Let $K_j^k$ for $j<4\cdot 3^{k}$ be such that:
\begin{itemize}
	\item each of them is of length $\varepsilon^{k+m+1}$;
	\item the distance between each two of them is at least $\varepsilon^{k+m+1}$;
	\item $K_j^k$ is contained in $K_{l}^{k-1}$, where $l=j\mod 3\cdot 3^{k-1}$;
	\item $\inf K_l^{k}=\inf K_{l}^{k-1}$ and $\sup K_{3^k+l}^{k}=\sup K_{l}^{k-1}$.
\end{itemize}

Now we can proceed to the construction of the intervals $I_a$ for $a\in A$. Let $\{a_0,a_1,\ldots\}$ be an increasing enumeration of the set $A$. Define also the family of intervals
$$\mathcal{K}=\{K_j^i:i\in\omega\textrm{ and }3\cdot 3^{i}\leq j<4\cdot 3^{i}\}.$$
Note that for each $K_j^i$ belonging to $\mathcal{K}$ there are no $i'>i$ and $l<4\cdot 3^{i'}$ with $K_l^{i'}$ contained in $K_j^i$. Let $\{K_0,K_1,\ldots\}$ be an enumeration of $\mathcal{K}$ with $|K_i|\geq |K_{i+1}|$. For each $i$ pick $I_{a_i}$ to be any interval of length $\varepsilon^{a_i}$ contained in $K_i$ ($|K_i|\geq \varepsilon^{m+i+1}\geq\varepsilon^{a_i}$ since $\min A>m$).

Observe that for any $i\in\omega$ and $j<3\cdot 3^{i}$ density of the set $\{a\in A:I_a\subset K_j^i\}$ is equal to $d(A)/(3\cdot 3^{i})$.

\textbf{Calculation of $\underline{d}(Z)$.}

We are ready to prove that the intervals $I_a$ for $a\in A$ are as needed. Consider any $s\in\omega$ and $r_0,\ldots,r_s\in (0,1)\setminus\mathbb{Q}$. Set also $D\subset\omega\setminus m$ and a sequence of intervals $(J_d)_{d\in D}$ with $|J_d|\leq\frac{1}{7^{d+1}}$ for all $d\in D$. 

Let $t_n=(3^0+3^1+\ldots+3^n)-1=\frac{3^{n+1}-3}{2}$ and $L_n=\{a_{t_n+1},a_{t_n+1},\ldots,a_{t_{n+1}}\}$ for each $n\in\omega$. The sets $L_n$ are picked in such a way that given $a\in L_n$ the interval $I_{a}$ is contained in $K^{n+1}_j$ for some $3\cdot 3^{n+1}\leq j<4\cdot 3^{n+1}$. 

We will show that for any $\delta>0$ we have
\begin{equation}
	\frac{\textrm{card}(Z\cap (a_{t_{n+1}}+1)\setminus (a_{t_n}+1))}{\textrm{card}(A\cap (a_{t_{n+1}}+1)\setminus (a_{t_n}+1))}>\frac{1}{2}-\delta\label{eq:1}
\end{equation}
for sufficiently large $n$ (equivalently: at least $\frac{1}{2}-\delta$ of all $a\in L_n$ are in $Z$ whenever $n$ is sufficiently large). Once this is done, we conclude that:
\begin{equation}
	\liminf_{n\rightarrow\infty}\frac{\textrm{card}(Z\cap (a_{t_{n}}+1))}{\textrm{card}(A\cap (a_{t_{n}}+1))}\geq\frac{1}{2} \label{eq:2}
\end{equation}
and hence:
$$\liminf_{n\rightarrow\infty}\frac{\textrm{card}(Z\cap (a_{t_{n}}+1))}{a_{t_{n}}+1}\geq\frac{d(A)}{2}.$$

Consider now $a_{t_n}<j<a_{t_{n+1}}$. Recall the definition of $t_n$'s and observe that $\lim_{n\rightarrow\infty}\frac{t_{n+1}-t_n}{2}=t_{n}$. By (\ref{eq:2}) and the fact that $\textrm{card}(A\cap (a_{t_{n}}+1))=t_n$, we get that:
$$\liminf_{n\rightarrow\infty}\frac{\textrm{card}(Z\cap (j+1))}{\textrm{card}(A\cap (j+1))}\geq$$$$\liminf_{n\rightarrow\infty}\frac{\textrm{card}(Z\cap (a_{t_{n}}+1))}{\textrm{card}(A\cap (a_{t_{n}}+1))+\frac{1}{2}\textrm{card}(A\cap ((a_{t_{n+1}}+1)\setminus (a_{t_{n}}+1)))}$$$$=\liminf_{n\rightarrow\infty}\frac{\textrm{card}(Z\cap (a_{t_{n}}+1))}{2\cdot\textrm{card}(A\cap (a_{t_{n}}+1))}.$$
It follows that $\underline{d}(Z)\geq \frac{d(A)}{4}$.

Therefore, it suffices to prove (\ref{eq:1}), i.e., that for any $\delta>0$ at least $\frac{1}{2}-\delta$ of all $a\in L_n$ are in $Z$ whenever $n$ is sufficiently large. Denote $Y=Y\left((I_a)_{a\in A},(J_d)_{d\in D}\right)$ (cf. Definition \ref{Y}) and let $A'$ consist of those $a\in A$ with $(r_i+I_{a'})\cap I_a=\emptyset$ for all $i\leq s$ and $a'\in A$. Set $\delta>0$.

The remaining part of the proof is divided into three steps. At first, we show that at least one half of all $a\in L_n$ is in $Y$ (for all $n\in\omega$). Then we prove that for sufficiently large $n$ at least $1-\delta$ of all $a\in L_n$ is in $A'$. These two steps together show that for sufficiently large $n$ at least $\frac{1}{2}-\delta$ of all $a\in L_n$ is in $Y\cap A'$. Finally, in the last step we conclude that for sufficiently large $n$ at least $\frac{1}{2}-\delta$ of all $a\in L_n$ is in $Z$. 

\textbf{Step 1. The set $Y$.}

Firstly, we will show that for any $n\in\omega$ at least $\frac{1}{2}$ of all $a\in L_n$ is in $Y$. Set $n\in\omega$ and consider the intervals $I_{a}$ for $a\in L_n$. Let $\{d_0,d_1,\ldots\}$ be an increasing enumeration of the set $D$. Observe that $J_{d_0}$ can intersect at most $\frac{1}{3}$ of those intervals. Similarly, $J_{d_0}\cup J_{d_1}$ can intersect at most $\frac{1}{3}+\frac{1}{9}$ of those intervals. Generally, the union of all $J_d$ with $d\in D\cap (n+m+1)$ can intersect at most $\frac{1}{3}+\frac{1}{9}+\ldots<\frac{1}{2}$ of the intervals $I_{a}$ with $a\in L_n$. Each $J_d$ with $d\in D$ and $d\geq n+m+1$ is of length at most $\varepsilon^{n+m+2}$, which is equal to the length of any $K^{n+1}_j$ for $3\cdot 3^{n+1}\leq j<4\cdot 3^{n+1}$. Therefore, each such $J_d$ cannot intersect more than one $I_a$ with $a\in L_n$. Hence, at least $\frac{1}{2}$ of all $a\in L_n$ is in $Y$.

\textbf{Step 2. The set $A'$.}

In this step we show that for sufficiently large $n\in\omega$ at least $1-\delta$ of all $a\in L_n$ is in $A'$.

Since $\sum_{i=0}^\infty\frac{1}{3}\left(\frac{2}{3}\right)^i=1$, there is $k\in\omega$ such that $\left(\sum_{i=0}^k\frac{1}{3}\left(\frac{2}{3}\right)^i\right)^{s+1}>1-\delta$. 

Without loss of generality, we may assume that each $r_i$ is in $(0,\frac{1}{7^m})$ (if some $r_i$ is greater than $\frac{1}{7^m}$, then trivially each $I_a$ is disjoint with the union of $(r_i+I_{a'})_{a'\in A}$). For each $i\leq s$ let $(r_{i,j})_{j\in\omega}\in 7^\omega$ be the unique sequence satisfying $r=\frac{r_{i,0}}{7^{m+1}}+\frac{r_{i,1}}{7^{m+2}}+\ldots$. For each $i\leq s$ let also $(q(i,j))_{j\in\omega}\subset\omega$ be the unique sequence with the following properties:
\begin{itemize}
	\item $q(i,0)$ is minimal with $r_{i,q(i,0)}\neq 0$;
	\item if $j\in\omega$ is such that $r_{i,q(i,j)}$ is odd, then $q(i,j+1)>q(i,j)$ is minimal with $r_{i,q(i,j+1)}\neq 6$;
	\item if $j\in\omega$ is such that $r_{i,q(i,j)}$ is even, then $q(i,j+1)>q(i,j)$ is minimal with $r_{i,q(i,j+1)}\neq 0$.
\end{itemize}
Those sequences are infinite, since $r_i$'s are not in $\mathbb{Q}$.

Pick elements $p(i,j)\in\omega$ for $i\leq s$ and $j\leq k$ such that:
\begin{itemize}
	\item $p(0,j)=q(0,j)$ for each $j\leq k$;
	\item $p(i,j)=q(i,l_i+j)$ for each $0<i\leq s$ and $j\leq k$, where $l_i=\min\{l\in\omega: q(i,l)>p(i-1,k)\}$.
\end{itemize}
Denote $p=p(s,k)$ and let $p'$ be greater than $q(0,k+1)$ and all $q(i,l_i+k+1)$ for $0<i\leq s$. 

In this step we will not need $p'$. The only reason for defining it is to assure in the third step that if $a\in A$ has some required properties, then for all $a'\in A$ and $i\leq s$ we have $(r_i+I_{a'})\cap J_d=\emptyset$ whenever $d\in D$ is such that $I_a\cap J_d\neq\emptyset$.

Set any $n>p$. We will show that at least $1-\delta$ of all $a\in L_n$ is in $A'$.

We need to define an auxiliary set $B\subset L_n$ with $B\subset A'$. Consider the intervals $K^{p(0,0)}_l$ for $l<3^{p(0,0)}$. Each of them is of length $\varepsilon^{p(0,0)+1}$ and therefore is disjoint with the union of $(r_{0}+I_a)_{a\in A}$. Define
$$B^0_0=\left\{a\in L_n: \exists_{l<3^{p(0,0)}}I_a\subset K^{p(0,0)}_l\right\}.$$
Set now any $i\leq s$ and $j\leq k$ with $(i,j)\neq (0,0)$. There are two possible cases.

\textbf{Case 1.} If $q(i,l_i+j-1)$ is even, then $p(i,j)\neq 0$ and each of the intervals $K^{p(i,j)}_l$ for $l<3^{p(i,j)}$ is disjoint with the union of $(r_{i}+I_a)_{a\in A}$ (note that the distance between such $K^{p(i,j)}_l$ and any $x\in\bigcup_{a\in A}(r_i+I+a)$ must be greater than $\frac{1}{7^{p'+m}}$). Define 
$$B^i_j=\left\{a\in L_n: \exists_{l<3^{p(i,j)}}I_a\subset K^{p(i,j)}_l\right\}.$$

\textbf{Case 2.} If $q(i,l_i+j-1)$ is odd, then $p(i,j)\neq 6$ and each of the intervals $K^{p(i,j)}_l$ for $3^{p(i,j)}\leq l<2\cdot 3^{p(i,j)}$ is disjoint with the union of $(r_{i}+I_a)_{a\in A}$  (note that the distance between such $K^{p(i,j)}_l$ and any $x\in\bigcup_{a\in A}(r_i+I+a)$ must be greater than $\frac{1}{7^{p'+m}}$). Define 
$$B^i_j=\left\{a\in L_n: \exists_{3^{p(i,j)}\leq l<2\cdot 3^{p(i,j)}}I_a\subset K^{p(i,j)}_l\right\}.$$

Let $B_i=B^i_0\cup\ldots\cup B^i_k$ and $B=\bigcap_{j\leq s}B_j$. Then $B\subset A'$.

We want to estimate how many of all $a\in L_n$ is in $B$. Denote $\alpha=\sum_{i=0}^k\frac{1}{3}\left(\frac{2}{3}\right)^i$.

Firstly, observe that each $B^i_j$ contains exactly $\frac{1}{3}$ of all $a\in L_n$. What is more, $B^i_0\cup B^i_1$ contains exactly $\frac{1}{3}+\frac{2}{3}\cdot\frac{1}{3}$ of them and, generally, $B_i$ contains exactly $\alpha$ of all $a\in L_n$. Consider now $B_0\cap B_1$. Similarly as above, $B_0\cap B^1_0$ contains exactly $\frac{1}{3}$ of all $a\in B_0$, $B_0\cap (B^1_0\cup B^1_1)$ contains exactly $\frac{1}{3}+\frac{2}{3}\cdot\frac{1}{3}$ of them and, generally, $B_0\cap B_1$ contains exactly $\alpha$ of all $a\in B_0$.

Likewise, we show that for any $i\leq s$ in the set $\bigcap_{j<i}B_j\cap B_i$ there is $\alpha$ of all $a\in \bigcap_{j<i}B_j$. Therefore, $\left(\alpha\right)^s>1-\delta$ of all $a\in L_n$ is in $B\subset A'$.

\textbf{Step 3. The set $Z$.}

By the last two steps we know that at least $\frac{1}{2}-\delta$ of all $a\in L_n$ is in $Y\cap A'$ whenever $n>p$. Observe that the set
$$F=\{a\in Y:\exists_{d\in D\cap (p'+m)}I_a\cap J_d\neq\emptyset\}$$
is finite (actually, of cardinality at most $p'$, by the definition of $Y$ and the fact that $D\subset\omega\setminus m$) and let $N$ be greater than $p$ and $\max\{n\in\omega: \exists_{a\in F}a\in L_n\}$. Pick any $n>N$ and let $B\subset L_n$ be as in the second step. Now we only need to observe that $Y\cap B\subset Z$, i.e., for each $a\in L_n$ with $a\in Y\cap B$ we have
$$\forall_{i\leq s}\forall_{\genfrac{}{}{0pt}{}{a',a''\in A}{a\neq a''}}(r_i+I_{a'})\cap J_d=\emptyset=I_{a''}\cap J_d$$
whenever $d\in D$ is such that $I_a\cap J_d\neq\emptyset$. This finishes the entire proof. 
\end{proof}

\begin{corollary}
\label{cor}
Suppose that $I$ is an interval of length $\frac{1}{7^m}$ for some $m\in\omega$ and $A\subset\omega$ is of positive density with $\min A>m$. Let the sequence of intervals $(I_a)_{a\in A}$ be defined according to Spacing Algorithm. Then for any $D\subset\omega\setminus m$ and a sequence of intervals $(J_d)_{d\in D}$ with $|J_d|\leq\frac{1}{7^{d+1}}$ for all $d\in D$, if $\underline{d}(D)<\frac{d(A)}{4}$, then there is $a\in A$ such that $I_a\cap\bigcup_{d\in D\cap a}J_d$ is empty.
\end{corollary}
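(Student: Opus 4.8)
The plan is to deduce this directly from the Spacing Algorithm (Lemma~\ref{lemma}) together with a short counting argument. First I would apply Lemma~\ref{lemma} to the interval $I$, the set $A$, and the data $D$, $(J_d)_{d\in D}$. Since that lemma is only phrased with a nonempty list of irrational shifts, I take $s=0$ and fix an arbitrary $r_0\in(0,1)\setminus\mathbb{Q}$; the lemma then produces a set $Z\subseteq Y:=Y\bigl((I_a)_{a\in A},(J_d)_{d\in D}\bigr)$ with $\underline{d}(Z)\geq\frac{d(A)}{4}$, and in particular $\underline{d}(Y)\geq\frac{d(A)}{4}$. (Alternatively one could read off $\underline{d}(Y)\geq\frac{d(A)}{4}$ from Step~1 of the proof of Lemma~\ref{lemma} together with the $t_n$-interpolation there, but invoking the lemma as a black box is cleaner.)

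Next, suppose toward a contradiction that \emph{no} $a\in A$ satisfies $I_a\cap\bigcup_{d\in D\cap a}J_d=\emptyset$. Then every $a\in A$ admits some $d(a)\in D$ with $d(a)<a$ and $J_{d(a)}\cap I_a\neq\emptyset$; fix such a choice function $d\colon A\to D$. The crucial observation is that $d$ is injective on $Y$: if $a\neq a'$ both belong to $Y$ and $d(a)=d(a')=:e$, then $J_e$ meets both $I_a$ and $I_{a'}$, contradicting the defining property of $Y$ in Definition~\ref{Y} (apply it to $a$, with the index $e\in D$ and the witness $a'$). Since also $d(a)<a$, whenever $a\in Y\cap(j+1)$ we get $d(a)\in D\cap(j+1)$, so $\textrm{card}\bigl(Y\cap(j+1)\bigr)\leq\textrm{card}\bigl(D\cap(j+1)\bigr)$ for every $j\in\omega$, whence $\underline{d}(Y)\leq\underline{d}(D)$. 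Together with $\underline{d}(D)<\frac{d(A)}{4}\leq\underline{d}(Y)$ from the first step, this is a contradiction, and the corollary follows.

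The only genuinely nontrivial ingredient is the first step, the density estimate $\underline{d}(Y)\geq\frac{d(A)}{4}$, which is precisely the content of Lemma~\ref{lemma} and is not reproved here; the rest is bookkeeping. It is worth stressing \emph{why} passing to $Y$ is unavoidable: the naive attempt to bound, for a fixed $d\in D$, the number of intervals $I_a$ with $d<a$ that $J_d$ can hit fails, because for $d<a$ the interval $J_d$ (of length at most $7^{-(d+1)}$) may be far longer than $I_a$ (of length $7^{-a}$) and can meet many of them simultaneously; membership in $Y$ is exactly what restores the injectivity of the choice function $d$ and makes the density comparison work.
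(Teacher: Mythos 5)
Your proof is correct and rests on the same two pillars as the paper's: the density bound $\underline{d}(Y)\geq\frac{d(A)}{4}$ obtained from Lemma~\ref{lemma}, and the fact that membership in $Y$ forces each $J_d$ to meet at most one $I_a$. The only difference is organizational: you argue by contradiction and build a global injection $a\mapsto d(a)$ from $Y$ into $D$ that strictly decreases indices, deducing $\underline{d}(Y)\leq\underline{d}(D)$ directly; the paper instead works at a single well-chosen finite stage $j$ (interpolating a threshold $\delta$ strictly between $\underline{d}(D)$ and $\frac{d(A)}{4}$, then picking $j$ where $\mathrm{card}(D\cap(j+1))<\mathrm{card}(Y\cap(j+1))$) and applies pigeonhole there. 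These are the same counting argument dressed differently, so I would call them essentially the same approach; your version is arguably a bit slicker, the paper's is more explicitly finitary.
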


\begin{proof}
Denote $Y=Y\left((I_a)_{a\in A},(J_d)_{d\in D}\right)$ and define
$$\delta=\frac{d(A)}{4}-\frac{1}{2}\left(\frac{d(A)}{4}-\underline{d}(D)\right).$$
Firstly, observe that by the Spacing Algorithm $\underline{d}(Y)\geq\frac{d(A)}{4}$ (since $Y$ contains a subset of lower asymptotic density at least $\frac{d(A)}{4}$). Therefore, there is $n_0\in\omega$ such that for every $j>n_0$ we have $\frac{\textrm{card}(Y\cap (j+1))}{j+1}>\delta$. On the other hand, $\underline{d}(D)<\frac{d(A)}{4}$, and hence there is $n_1\in\omega$ such that for every $i>n_1$ one can find $j>i$ with $\frac{\textrm{card}(D\cap (j+1))}{j+1}<\delta$. 

Put $n=\max\{n_0,n_1\}$ and pick any $i>n$. Then there is $j>i$ such that $\frac{\textrm{card}(D\cap (j+1))}{j+1}<\delta$ but $\frac{\textrm{card}(Y\cap (j+1))}{j+1}>\delta$. Hence, $\textrm{card}(D\cap (j+1))<\textrm{card}(Y\cap (j+1))$. By the definition of the set $Y$, each $J_d$ with $d\in D\cap (j+1)$ can intersect at most one $I_a$ with $a\in Y\cap (j+1)$, so there must be some $a\in Y\cap (j+1)$ such that $I_a\cap\bigcup_{d\in D\cap (j+1)}J_d$ is empty. Then also $I_a\cap\bigcup_{d\in D\cap a}J_d$. This finishes the proof.
\end{proof}

\section{Additivity of the ideal of microscopic sets}
\label{sec:results}

In this section we proceed to our main results.

\begin{thm}
\label{density}
There is a bounded microscopic set which does not admit a cover of lower asymptotic density zero.
\end{thm}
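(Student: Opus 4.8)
The plan is to realize $M$ as the branch space of an infinite tree of closed intervals, built by iterating the Spacing Algorithm. Start with $I_\emptyset=[0,1]$, so that $M\subseteq[0,1]$ is bounded and the hypothesis "$D\subseteq\omega\setminus m$" of Corollary~\ref{cor} is automatic at the root (here $m=t_\emptyset=0$). To each node $\sigma\in\omega^{<\omega}$ attach an interval $I_\sigma$ with $|I_\sigma|=7^{-t_\sigma}$, where $t_\emptyset=0$ and $t_{\sigma^\frown a}=a$. Given $I_\sigma$, pick a set $A_\sigma\subseteq\omega$ of positive density with $\min A_\sigma>t_\sigma$ and run the Spacing Algorithm inside $I_\sigma$ with this $A_\sigma$, obtaining intervals $(I_{\sigma^\frown a})_{a\in A_\sigma}$ with $|I_{\sigma^\frown a}|=7^{-a}$ and $I_{\sigma^\frown a}\subseteq I_\sigma$; declare the children of $\sigma$ to be the $\sigma^\frown a$, $a\in A_\sigma$. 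Set $M=\bigcap_k\bigcup_{|\sigma|=k}I_\sigma$. Since $|I_\sigma|\to 0$ along every branch and only finitely many $I_\sigma$ exceed any fixed length, $M$ is exactly the set of points determined by branches of the tree.

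The core of the proof is to show that, for $\varepsilon=\tfrac17$, no family $(J_d)_{d\in D}$ with $|J_d|\le 7^{-d-1}$ and $\underline d(D)=0$ covers $M$; this already witnesses that $M$ does not admit a cover of lower asymptotic density zero. So suppose such a $(J_d)_{d\in D}$ is given. Since $\underline d(D)=0<\tfrac{d(A_\emptyset)}4$ and $D\subseteq\omega\setminus 0$, Corollary~\ref{cor} produces $a_1\in A_\emptyset$ with $I_{(a_1)}\cap\bigcup_{d\in D\cap a_1}J_d=\emptyset$. Now work inside $I_{(a_1)}$: any $J_d$ meeting $I_{(a_1)}$ has $d\ge a_1$, so the restriction of the cover is $(J_d)_{d\in D\setminus a_1}$, where $D\setminus a_1\subseteq\omega\setminus a_1$ and $\underline d(D\setminus a_1)=\underline d(D)=0<\tfrac{d(A_{(a_1)})}4$; applying Corollary~\ref{cor} again gives $a_2\in A_{(a_1)}$ with $I_{(a_1,a_2)}\cap\bigcup_{d\in D\cap a_2}J_d=\emptyset$. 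Iterating (the disjointness from the lower indices is inherited from the parent), we obtain a branch $(a_1,a_2,\dots)$ with $I_{(a_1,\dots,a_k)}\cap\bigcup_{d\in D\cap a_k}J_d=\emptyset$ for every $k$. Let $x$ be the unique point of $\bigcap_k I_{(a_1,\dots,a_k)}$. Then $x\in M$, and for each $d\in D$ we have $d<a_k$ for all large $k$, hence $x\notin J_d$; thus $(J_d)_{d\in D}$ misses $x$, a contradiction.

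It remains to choose the parameters so that $M$ is microscopic, and this is the step I expect to be the main obstacle. Taking the densities $d(A_\sigma)$ to decrease rapidly with the level of $\sigma$ and the values $\min A_\sigma$ to increase rapidly keeps the number of intervals $I_\sigma$ of length above a given threshold small enough that, for each $\varepsilon>0$, one can extract from the $I_\sigma$ (from suitably mixed levels) a countable subfamily covering $M$ whose $n$-th member has length $\le\varepsilon^{n+1}$; this comes down to a bookkeeping estimate on how the interval lengths at different levels interleave. The tension is that the iteration in the previous paragraph requires $A_\sigma$ of positive density at every node (so that Corollary~\ref{cor}, via Lemma~\ref{lemma}, always returns an escaping child — indeed a set of them of positive lower density in $A_\sigma$ — whatever the adversary does), whereas microscopicity pushes those densities toward $0$ and the scales toward infinity; arranging both simultaneously is the crux. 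By comparison, the reduction to $\varepsilon=\tfrac17$ is painless, since what is proved is exactly that some $\varepsilon$ admits no density-zero cover, and in any case the whole construction together with Lemma~\ref{lemma} rescales with $7$ replaced by any integer $q\ge 2$.
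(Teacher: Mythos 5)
Your escaping-branch argument is sound and is essentially the paper's: build a tree of intervals via the Spacing Algorithm, then iterate Corollary~\ref{cor} down the tree to find a branch whose limit point is missed by any purported cover of lower density zero. The gap you flag --- proving that the resulting set is microscopic --- is genuine, and the paper's resolution is different from the scheme you sketch, so it is worth spelling out. You propose making the densities $d(A_\sigma)$ decay and the scales $\min A_\sigma$ blow up with depth, and then assembling a microscopic cover from ``suitably mixed levels''; this runs into the problem you yourself identify. The paper instead rigs the indexing so that each single level already supplies a good cover, and no mixing or degeneration is needed. Concretely: all level-$m$ intervals are indexed by the set $2^m\cdot(\omega+1)$, so there is exactly one interval $I^m_j$ of length $\varepsilon^j$ (with $\varepsilon=1/7$) for each nonzero multiple $j$ of $2^m$. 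To pass to level $m+1$, the index set $2^{m+1}\cdot(\omega+1)$ is partitioned as $A^m_n=2^{m+n+1}+2^{m+n+2}\cdot\omega$ over $n\in\omega$, and the Spacing Algorithm is run inside $I^m_{2^m(n+1)}$ with parameter set $A^m_n$. Each $A^m_n$ has positive density $2^{-(m+n+2)}$ and $\min A^m_n=2^{m+n+1}>2^m(n+1)$, so Lemma~\ref{lemma} and Corollary~\ref{cor} remain applicable at every node (which is exactly what your branch argument needs), while microscopicity becomes a one-line check: given $\varepsilon'>0$ pick $m$ with $\varepsilon^{2^m}<\varepsilon'$, and then the level-$m$ intervals enumerated as $J_j=I^m_{2^m(j+1)}$ cover $X$ with $|J_j|=\varepsilon^{2^m(j+1)}<(\varepsilon')^{j+1}$. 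Your framework, where $t_{\sigma^\frown a}=a$ but the sets $A_\sigma$ at a given level are not coordinated, does not guarantee this; different nodes at the same level could produce several intervals of the same length, and then no clean enumeration is available. The missing idea is precisely the coordination: the children's index sets must partition a single arithmetic progression so that each level, taken as a whole, provides exactly one interval of each permitted length.
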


\begin{proof}
Let $I=[0,1]$ and $\varepsilon=\frac{1}{7}$. The construction of the required set $X$ is as follows. We inductively define intervals $I^n_j$ for $n\in\omega$ and $j\in 2^n\cdot (\omega+1)$. At the end, we will put $X=\bigcap_{i\in\omega}\bigcup_{j\in 2^i\cdot (\omega+1)}I^i_j$. 

At the first step, apply the Spacing Algorithm for $I$ and $(\omega+1)$ (note that $|I^{-1}_{-1}|=\varepsilon^0$ and $0<\min (\omega+1)$) to get closed intervals $I^0_j$ for $j\in (\omega+1)$ with $|I^0_j|=\varepsilon^{j}$. In the $n$-th step of the induction (for $n>0$) we construct a partition $(A_j^{n-1})_{j\in\omega}$ of the set $2^n\cdot (\omega+1)$ and a sequence of intervals $(I^n_j)_{j\in 2^n\cdot (\omega+1)}$ such that $|I^{n}_j|=\varepsilon^{j}$. The relation between elements of the partitions and the family of intervals is as follows:
$$I^n_j\subset I^{n-1}_{2^{n-1}(k+1)}\quad\Leftrightarrow\quad j\in A^{n-1}_k.$$
So suppose now that $A^i_k$ and $I^{i+1}_j$ are defined for all $i<m$, $k\in\omega$ and $j\in 2^{i+1}\cdot (\omega+1)$. Let $A^{m}_j=2^{m+j+1}+2^{m+j+2}\cdot\omega$ for all $j\in\omega$. Then $(A^{m}_j)_{j\in\omega}$ is a partition of $2^{m+1}\cdot (\omega+1)$ into sets of positive density. For each $n\in\omega$ apply the Spacing Algorithm for $I^{m}_{2^{m}(n+1)}$ and $A^{m}_n$ (note that $2^{m}(n+1)<2^{m+n+1}=\min A^{m}_n$) to get closed intervals $I^{m+1}_j$ for $j\in 2^{m+1}\cdot (\omega+1)$ with $|I^{m+1}_j|=\varepsilon^{j}$ and
$$I^{m+1}_j\subset I^{m}_{2^{m}(n+1)}\quad\Leftrightarrow\quad j\in A^{m}_n.$$

Finally, define the sets 
$$X_i=\bigcup_{j\in 2^i\cdot (\omega+1)}I^i_j\textrm{ and }X=\bigcap_{i\in\omega}X_i.$$ 
Then $X$ is a bounded microscopic set. Indeed, given $\varepsilon'>0$ one can find $m>1$ with $\varepsilon^{2^m}<\varepsilon'$. Then it suffices to note that the sequence of intervals $(I^m_{2^m(j+1)})_{j\in \omega}$ covers $X_m$ (and hence the whole set $X$) and 
$$|I^m_{2^m(j+1)}|=\varepsilon^{2^m(j+1)}<(\varepsilon')^{(j+1)}.$$

Now let $D\subset\omega$ be of lower asymptotic density zero and $(J_d)_{d\in D}$ be a sequence of intervals such that $|J_d|\leq\varepsilon^{d+1}$ for all $d\in D$. We will show that $(J_d)_{d\in D}$ cannot cover the set $X$ by constructing an increasing sequence $(j_n)_{n\in\omega}\subset\omega$ with $I^n_{j_n}\supset I^{n+1}_{j_{n+1}}$ and $I^n_{j_n}\cap\bigcup_{d\in D\cap j_n}J_d=\emptyset$. Then $\bigcap_{n\in\omega}I^n_{j_n}$ will define a point from $X$ which does not belong to $\bigcup_{d\in D}J_d$.

The construction of the sequence $(j_n)_{n\in\omega}$ is as follows. By Corollary \ref{cor} (applied to $I$, $(\omega+1)$ and the sequence of intervals $(J_d)_{d\in D}$), there is $j_0\in (\omega+1)$ such that $I^0_{j_0}\cap\bigcup_{d\in D\cap j_0}J_d=\emptyset$. Assume now that $j_i$ for $i\leq n$ are as needed. Again, by Corollary \ref{cor} (applied to $I^n_{j_n}$, $A^n_{j_n/2^n-1}$ and the sequence of intervals $(J_d)_{d\in D\setminus j_n}$), we can find $j_{n+1}\in A^n_{j_n/2^n-1}$ (hence $I^n_{j_n}\supset I^{n+1}_{j_{n+1}}$) with $I^{n+1}_{j_{n+1}}\cap\bigcup_{d\in D\cap (j_{n+1}\setminus j_n)}J_d=\emptyset$. Then also $I^{n+1}_{j_{n+1}}\cap\bigcup_{d\in D\cap j_{n+1}}J_d=\emptyset$, by $I^n_{j_n}\supset I^{n+1}_{j_{n+1}}$ and the induction assumption. This ends the construction and the entire proof.
\end{proof}

We are ready to prove the main theorem of this paper.  

\begin{thm}
\label{add}
Additivity of the ideal of microscopic sets is equal to $\omega_1$.
\end{thm}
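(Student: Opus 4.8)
Since $\mathcal{M}$ is a $\sigma$-ideal of subsets of $\mathbb{R}$ containing all singletons, $\texttt{add}(\mathcal{M})\geq\omega_1$ is automatic, so the whole content of the theorem is to produce a family $\{M_\alpha:\alpha<\omega_1\}\subseteq\mathcal{M}$ with $\bigcup_{\alpha<\omega_1}M_\alpha\notin\mathcal{M}$. I would take $M_\alpha=q_\alpha+X$, where $X$ is the bounded microscopic set furnished by Theorem \ref{density} (so that $X$ lives inside a system of intervals $(I^i_j)$ built by the Spacing Algorithm and, in particular, $\varepsilon=\frac17$ witnesses that $X$ admits no cover of lower density $0$), and $\{q_\alpha:\alpha<\omega_1\}\subseteq(0,1)\setminus\mathbb{Q}$ is any set of size $\omega_1$ lying in pairwise distinct cosets of $\mathbb{Q}$; such a set exists in ZFC. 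Each $M_\alpha$ is a translate of a microscopic set, hence microscopic, so everything reduces to showing that $\bigcup_\alpha M_\alpha$ is not microscopic. I would witness this again with $\varepsilon=\frac17$, i.e., prove that no sequence of intervals $(J_n)_{n\in\omega}$ with $|J_n|\leq\frac{1}{7^{n+1}}$ covers $\bigcup_{\alpha<\omega_1}(q_\alpha+X)$.

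The clean way to organise this is via a Key Lemma: \emph{for every sequence of intervals $(J_n)_{n\in\omega}$ with $|J_n|\leq\frac{1}{7^{n+1}}$, the set $\{q\in\mathbb{R}:q+X\subseteq\bigcup_{n\in\omega}J_n\}$ is countable.} Granting this, uncountability of $\{q_\alpha:\alpha<\omega_1\}$ yields some $\alpha$ with $q_\alpha+X\not\subseteq\bigcup_n J_n$, so $\bigcup_{\beta<\omega_1}(q_\beta+X)\not\subseteq\bigcup_n J_n$, which is exactly what is needed, and $\texttt{add}(\mathcal{M})\leq\omega_1$ follows. Note the Key Lemma really exploits the structure of $X$ (it is false for a one-point $X$, where the set of good translates has positive measure), so the translate-handling part of the Spacing Algorithm (Lemma \ref{lemma}), which is \emph{not} used in Theorem \ref{density}, must be what does the work.

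To prove the Key Lemma I would argue by contradiction. If the set of good translates were uncountable, then after replacing $(J_n)$ by $(-q_0+J_n)$ for a suitable good $q_0$ (using that an uncountable set of reals meets some bounded window in an uncountable, hence $\mathbb{Q}$-co|incongruent-rich, set), we may assume $X\subseteq\bigcup_n J_n$ and that there is an uncountable set $R$ of pairwise $\mathbb{Q}$-incongruent irrationals $r\in(0,1)$ with $r+X\subseteq\bigcup_n J_n$. Now I would run a recursive construction down the defining system $(I^i_j)$ of $X$ in the spirit of the proof of Theorem \ref{density}: build a decreasing chain $I^0_{j_0}\supseteq I^1_{j_1}\supseteq\cdots$ from that system and an increasing chain of finite sets $F_0\subseteq F_1\subseteq\cdots\subseteq R$ with $\bigcup_n F_n$ infinite, maintaining that $(r+I^n_{j_n})\cap J_d=\emptyset$ for all $r\in F_n$ and all $d$ below a bound tending to $\infty$. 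At step $n$, given $I^n_{j_n}$, I would apply the Spacing Algorithm to $I^n_{j_n}$ with the translates currently in $F_n$ together with one fresh, suitably chosen $r\in R$, and use that the resulting good set $Z$ has \emph{positive lower density} (hence is nonempty and cofinally rich) to choose $I^{n+1}_{j_{n+1}}$ so that, for every $r\in F_{n+1}$, the translate $r+I^{n+1}_{j_{n+1}}$ avoids a longer initial segment of the $J_d$'s. The point $x:=\bigcap_n I^n_{j_n}$ then lies in $X$, while $r+x\notin\bigcup_d J_d$ for every $r\in\bigcup_n F_n$; since $\bigcup_n F_n\subseteq R$, this contradicts $r+X\subseteq\bigcup_d J_d$.

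The main obstacle is making this diagonalisation never get stuck, and it is precisely here that the bespoke clauses of Lemma \ref{lemma} (the set $Y$, the set $A'$, and the $7$-adic bookkeeping through the $q(i,j),p(i,j)$) are indispensable. Unlike in Theorem \ref{density}, the index set of $(J_n)$ has full density, so Corollary \ref{cor} is unavailable and the base copy $X$ itself is — and must be — covered; escape is only possible through a genuine translate, which forces us to steer several translated copies simultaneously and, at each stage, to bring in a fresh $r\in R$ that is still "safe" for the part of the construction already committed to while keeping an uncountable reservoir of further candidates alive. Reconciling these two demands — that the good set $Z$ of the Spacing Algorithm still contains the indices we want after we additionally insist the new interval dodge the finitely many already-relevant $J_d$ for every translate in $F_{n+1}$, and that the uncountable reservoir survives each stage — is the delicate core; the fact that Lemma \ref{lemma} delivers positive \emph{lower density} of $Z$ rather than mere non-emptiness is exactly the slack that lets these finitely many extra constraints be absorbed at every stage.
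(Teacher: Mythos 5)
Your family (translates of the Theorem~\ref{density} set $X$ by pairwise $\mathbb{Q}$-incongruent irrationals) is exactly the paper's, and your Key Lemma is true --- it is what the paper's argument in fact establishes. But the proof you sketch for the Key Lemma has a gap, and the paper's proof is organised quite differently.

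The gap: you want to ``choose $I^{n+1}_{j_{n+1}}$ so that, for every $r\in F_{n+1}$, the translate $r+I^{n+1}_{j_{n+1}}$ avoids a longer initial segment of the $J_d$'s,'' and you justify the possibility of such a choice by the positive lower density of the set $Z$ from Lemma~\ref{lemma}. But $Z$ does not deliver that property. Look at its definition: for $a\in Z$, the guarantee is that whenever the \emph{base} interval $I_a$ meets $J_d$, no translate $r_i+I_{a'}$ meets that same $J_d$. This constrains which $d$'s the translates may use \emph{relative to the $d$'s used by the untranslated $I_a$}; it says nothing about $r_i+I_a$ missing any initial block of the $J_d$'s. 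To get the avoidance you need you would want something like Corollary~\ref{cor} applied to the shifted covers $(-r+J_d)_d$, but there $D=\omega$ has full density, which is exactly the regime in which Corollary~\ref{cor} is unavailable (as you yourself observe). So the positive lower density of $Z$ is not the kind of slack that lets you thread a single chain through all translates in $F_{n+1}$ at once.

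The paper avoids constructing an escaping point in the hard case. For each $\alpha$ it first tries the \emph{single-translate} diagonalisation, of exactly the Theorem~\ref{density} shape and involving no other translates; if this succeeds for some $\alpha$, a point of $r_\alpha+X$ escapes and we are done. If it is stuck for every $\alpha$, each $\alpha$ has a stuck pair $(n_\alpha,m_\alpha)$, and since there are only countably many possible pairs an uncountable $F\subset\omega_1$ shares one $(n,m)$. Then Lemma~\ref{lemma} is applied exactly once: picking $s$ with $\frac1s\le\frac{d(A)}4$ and $s+1$ translates $r_{\alpha_0}<\cdots<r_{\alpha_s}$ from $F$, each $Z_i$ is pushed forward (via the map sending $a$ to the unique $k$ with $(r_{\alpha_i}+I^{n+1}_a)\cap J_k\ne\emptyset$) to a set $Z'_i\subset\omega$; the stuck condition forces this map to be non-increasing, hence to preserve lower density, and the defining clause of $Z$ makes the $Z'_i$ pairwise disjoint. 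Having $s+1$ pairwise disjoint subsets of $\omega$ each of lower density at least $\frac1s$ is the contradiction. This is precisely what the odd-looking clause in the definition of $Z$ is built to support: a one-shot density-counting argument, not a recursive construction that steers the chain through translates.
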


\begin{proof}
Recall that $\texttt{add}\left(\mathcal{M}\right)\geq\omega_1$ (cf. Remark \ref{rem} and the discussion below it). Therefore, it suffices to prove that there is a family of cardinality $\omega_1$ consisting of microscopic sets and such that its union is not microscopic. 

Define the set
$$T=\left\{(i,j):\textrm{ }i\in\omega\textrm{ and }j\in 2^i\cdot (\omega+1)\right\}\cup\{(-1,-1)\}$$ 
and put $I^{-1}_{-1}=[0,1]$. Let $X$ and $I^i_j$ for $(i,j)\in T$ be as in the proof of Theorem \ref{density} and pick a family $\{r_\alpha:\alpha<\omega_1\}\subset (0,1)$ such that $r_\alpha-r_\beta\notin\mathbb{Q}$ whenever $\alpha\neq\beta$. Define $X_\alpha=r_\alpha+X$ for all $\alpha<\omega_1$. Clearly, each $X_\alpha$ is microscopic. We will show that $\bigcup_{\alpha<\omega_1}X_\alpha$ is not microscopic.

Set $\varepsilon=\frac{1}{7}$ and any sequence of intervals $(J_n)_{n\in \omega}$ such that $|J_n|\leq\varepsilon^{n+1}$ for all $n\in\omega$. Assume that $(J_n)_{n\in \omega}$ covers $\bigcup_{\alpha<\omega_1}X_\alpha$.

Consider the case that there is $\alpha<\omega_1$ such that for any pair $(n,m)\in T$ if $(r_\alpha+I_m^n)\cap\bigcup_{k<m}J_k=\emptyset$, then one can find $l\in 2^{n+1}\cdot (\omega+1)$ such that $(r_\alpha+I_l^{n+1})\subset (r_\alpha+I_m^n)$ and $(r_\alpha+I_l^{n+1})\cap\bigcup_{k<l}J_k=\emptyset$. This condition allows us to construct an increasing sequence $(m_n)_{n\in\omega}$ such that $(r_\alpha+I_{m_{n+1}}^{n+1})\subset (r_\alpha+I_{m_n}^{n})$ and $(r_\alpha+I_{m_n}^{n})\cap\bigcup_{k<m_n}J_k=\emptyset$ for all $n\in\omega$. Hence, the intersection $\bigcap_{n\in\omega}(r_\alpha+I_{m_n}^{n})$ defines a point from $X_\alpha$ (and hence from $\bigcup_{\alpha<\omega_1}X_\alpha$) which is disjoint with the union $\bigcup_{k\in\omega}J_k$. 

Therefore, we can assume from now on that for any $\alpha<\omega_1$ there is a pair $(n_\alpha,m_\alpha)\in T$ such that $(r_\alpha+I_{m_\alpha}^{n_\alpha})\cap\bigcup_{k<m_\alpha}J_k=\emptyset$ but $(r_\alpha+I_{j}^{n_\alpha+1})\cap\bigcup_{k<j}J_k\neq\emptyset$ whenever $j\in 2^{n_\alpha+1}\cdot (\omega+1)$ is such that $(r_\alpha+I_{j}^{n_\alpha+1})\subset (r_\alpha+I_{m_\alpha}^{n_\alpha})$ (note that trivially $(r_\alpha+I_{-1}^{-1})\cap\bigcup_{k<-1}J_k=\emptyset$ for all $\alpha<\omega_1$). There are only countably many possible choices for $(n_\alpha,m_\alpha)$, so one can find  an uncountable $F\subset\omega_1$ and a pair $(n,m)\in T$ such that $(n,m)=(n_\alpha,m_\alpha)$ for all $\alpha\in F$.

Define the set $A=\left\{a\in 2^{n+1}\cdot (\omega+1):I^{n+1}_a\subset I^n_m\right\}$ (note that $A=A^m_{m/2^n-1}$ in the notation from the proof of Theorem \ref{density}). By the construction of the set $X$ we have $d(A)>0$. Let $s\in (\omega+1)$ be such that $\frac{1}{s}\leq\frac{d(A)}{4}$ and pick $\alpha_0,\ldots,\alpha_s\in F$ with $r_{\alpha_0}<r_{\alpha_1}<\ldots<r_{\alpha_s}$. For each $i\leq s$ let $Y_i\subset A$ be the set $Y\left((r_{\alpha_i}+I^{n+1}_{a})_{a\in A},(J_k)_{k\in\omega}\right)$ (cf. Definition \ref{Y}). Let also $Z_i$, for $i\leq s$, be the set of those $a\in Y_i$ which have the property that given any $k\in\omega$ if $(r_{\alpha_i}+I^{n+1}_{a})\cap J_k\neq\emptyset$, then there are no $i<j\leq s$ and $a'\in A$ such that $(r_{\alpha_j}+I^{n+1}_{a'})\cap J_k\neq\emptyset$ (hence $Z_s=Y_s$).

By the Spacing Algorithm, for each $i\leq s$ the set $Z_i$ has lower asymptotic density at least $\frac{d(A)}{4}\geq\frac{1}{s}$. Define
$$Z'_i=\left\{k\in\omega: \exists_{a\in Z_i}(r_{\alpha_i}+I^{n+1}_{a})\cap J_k\neq\emptyset\right\}$$
for all $i\leq s$. Those sets also have lower asymptotic density at least $\frac{1}{s}$. Indeed, set any $i\leq s$ and consider a bijection $\phi$ between $Z_i$ and $Z'_i$ such that $\phi(a)$ is equal to $k\in Z'_i$ if $(r_{\alpha_i}+I^{n+1}_{a})\cap J_k\neq\emptyset$ for $a\in Z_i$. This function is well defined, since $k$ with the above property is unique by the definition of $Y_i$. Observe that $(r_{\alpha_i}+I_{a}^{n+1})\cap\bigcup_{k<a}J_k\neq\emptyset$ for all $a\in A$ by $\alpha_i\in F$ and the definition of $(n,m)$. Therefore $\phi(a)\leq a$ for all $a\in Z_i$. It follows that $\underline{d}(Z'_i)\geq\underline{d}(Z_i)\geq\frac{1}{s}$.

Moreover, $Z'_i\cap Z'_j=\emptyset$ for $i<j\leq s$. Indeed, if $k\in Z'_i$, then $(r_{\alpha_i}+I^{n+1}_{a})\cap J_k\neq\emptyset$ for some $a\in Z_i$, and hence (by the definition of $Z_i$) there is no $a'\in A$ such that $(r_{\alpha_j}+I^{n+1}_{a'})\cap J_k\neq\emptyset$, which means that $k\notin Z'_j$. 

Therefore, $\{Z'_0,\ldots,Z'_s\}$ is a family of $s+1$ pairwise disjoint subsets of $\omega$, each of which is of lower asymptotic density at least $\frac{1}{s}$. A contradiction. Hence, $(J_n)_{n\in \omega}$ cannot cover the set $\bigcup_{\alpha<\omega_1}X_\alpha$.
\end{proof}

\section{Some generalizations of the ideal of microscopic sets}
\label{sec:generalizations}

In this section we investigate additivity of two ideals closely related to $\mathcal{M}$. 

\begin{definition}
A set $M\subset\mathbb{R}$ is in $\mathcal{M}_{\ln}$ if for each $\varepsilon>0$ there exists a sequence of intervals $(J_n)_{n\in\omega}$ covering $M$ and such that $|J_n|\leq \varepsilon^{\ln (n+2)}$ for each $n\in\omega$.
\end{definition}

\begin{definition}
A set $M\subset\mathbb{R}$ is in $\mathcal{M}'$ if for each $\varepsilon>0$ there exists $D\subset\omega$ of asymptotic density zero and a sequence of intervals $(J_n)_{n\in D}$ such that $M\subset\bigcup_{n\in D}J_n$ and $|J_n|\leq \varepsilon^{n+1}$ for each $n\in D$.
\end{definition}

Recently, Horbaczewska in \cite{Horbaczewska} defined the so-called $(f_n)_{n\in\omega}$-microscopic sets. This concept was deeply studied in \cite{nano}. Let us point out that in the terminology of \cite{Horbaczewska}, $\mathcal{M}_{\ln}$ is the family of all $(x^{\ln (n+2)})_{n\in\omega}$-microscopic sets.

Observe that $\mathcal{S}\subset\mathcal{M}'\subset\mathcal{M}\subset\mathcal{M}_{\ln}$. In fact, all inclusions are proper. One can easily construct a compact microscopic set of cardinality $2^\omega$, which shows that $\mathcal{S}\neq\mathcal{M}'$. Theorem \ref{density} gives us an example of a microscopic set not belonging to $\mathcal{M}'$. Finally, $\mathcal{M}\neq\mathcal{M}_{\ln}$ will follow from the fact that $\mathcal{M}_{\ln}$ has additivity $2^\omega$ under Martin's axiom (cf. Proposition \ref{ln}).

The following lemma will be useful in our further considerations.

\begin{lemma}
\label{ln-lemma}
Set $M\in\mathcal{M}_{\ln}$ and $\varepsilon\in (0,1)$. Suppose that $(J_d)_{d\in D}$ is such that $d(D)=0$ and $|J_d|\leq\varepsilon^{\ln (d+2)}$ for all $d\in D$. Then there are $E\subset\omega$ disjoint with $D$ and of asymptotic density zero and a sequence of intervals $(J_e)_{e\in E}$ covering $M$ and such that $|J_e|\leq \varepsilon^{\ln (e+2)}$ for each $e\in E$. 
\end{lemma}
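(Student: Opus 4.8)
The plan is to exploit the slow growth of $\ln(n+2)$: moving an interval from index $d$ to a much larger index $e$ costs almost nothing, because $\ln(e+2)/\ln(d+2)$ tends to $1$ as both go to infinity along a suitably chosen relation. First I would fix, for each $k\in\omega$, an increasing sequence $(N_k)$ of integers with $N_0=0$, so fast-growing that $\ln(N_{k+1}+2)\geq (1-2^{-k})^{-1}\ln(N_k+2)$ and also $\mathrm{card}\bigl(D\cap (N_{k+1}+1)\bigr)/(N_{k+1}+1)<2^{-k}$ — the latter is possible since $d(D)=0$. The block $B_k:=[N_k,N_{k+1})$ will be the ``workspace'' into which I repackage the $M$-cover.

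Next, since $M\in\mathcal{M}_{\ln}$, for each $k$ pick a sequence of intervals $(J'_n)_{n\in\omega}$ covering $M$ with $|J'_n|\leq (\varepsilon')^{\ln(n+2)}$ where $\varepsilon'\in(0,\varepsilon)$ is chosen once and for all so that $\varepsilon'<\varepsilon^{(1-2^{-k_0})^{-1}}$ fails to be an obstruction — more carefully, I would simply take a single $\varepsilon'$ with $(\varepsilon')^{c}\leq\varepsilon$ for a fixed constant $c>1$ to be determined, and handle the index shift block by block. Within block $B_k$, I re-index finitely many of the intervals $J'_n$: there are $N_{k+1}-N_k$ available slots in $B_k\setminus D$ minus the few slots occupied by $D$, but $\mathrm{card}(B_k\cap D)$ is a vanishing fraction, so for $k$ large enough $B_k\setminus D$ still contains a sub-block $[M_k,M_{k+1})$ of comparable length into which I inject the next batch of $(J'_n)$'s. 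The key estimate is that for $e\in B_k$ and the corresponding $n$, $\ln(e+2)\leq \ln(N_{k+1}+2)\leq (1-2^{-k})^{-1}\ln(N_k+2)\leq (1-2^{-k})^{-1}\ln(n+2)$ once $n\geq N_k$; combined with $|J'_n|\leq(\varepsilon')^{\ln(n+2)}$ this gives $|J'_n|\leq \varepsilon^{\ln(e+2)}$ provided $(\varepsilon')^{(1-2^{-k})}\leq\varepsilon$, which holds for all large $k$ with a fixed choice of $\varepsilon'<\varepsilon$. Set $E$ to be the union over large $k$ of the slots actually used in $B_k$; finitely many small intervals (those from small $k$ that couldn't be placed) can be absorbed by enlarging finitely many already-chosen $J_e$'s or simply by covering $M$ redundantly from a single large block, since omitting finitely many blocks from an $\mathcal{M}_{\ln}$-cover is harmless.

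Finally I would verify the two required properties of $E$: disjointness from $D$ is built in by construction (we only used slots in $B_k\setminus D$), and $d(E)=0$ follows because within each block $B_k$ we used at most $N_{k+1}-N_k$ slots, but we can arrange that the sub-block of $B_k$ we occupy has length at most $2^{-k}(N_{k+1}+1)$ — this is where the fast growth of $(N_k)$ is spent a second time, choosing $N_{k+1}$ so large that the batch of $J'$-intervals needed to cover $M$ at precision level $k$ (finitely many!) fits in a tiny initial segment of $B_k$. Then for any $j$, writing $N_k\leq j<N_{k+1}$, $\mathrm{card}(E\cap(j+1))\leq \sum_{i\leq k}(\text{batch size}_i)=o(N_k)\leq o(j)$, so $d(E)=0$. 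The main obstacle is the simultaneous bookkeeping: one needs the same sequence $(N_k)$ to be fast enough for the logarithm-ratio estimate, for the density-of-$D$ estimate, and for the density-of-$E$ estimate, so I would choose $(N_k)$ recursively, at stage $k$ imposing all three lower bounds at once. Everything else is routine; the conceptual point is just that $\ln$ grows slowly enough that index shifts are free, which is exactly why $\mathcal{M}_{\ln}$ behaves so differently from $\mathcal{M}$.
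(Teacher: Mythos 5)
Your conceptual idea is the same as the paper's: since $\ln$ grows slowly, a polynomial index shift $n\mapsto e$ only worsens $\varepsilon^{\ln(n+2)}$ to $\varepsilon^{\ln(e+2)}$ by a fixed power, and this can be absorbed by starting from an $\mathcal{M}_{\ln}$-cover with a smaller base (the paper takes $\varepsilon^m$ in place of your $\varepsilon'$, yielding the clean identity $(\varepsilon^m)^{\ln(n+2)}=\varepsilon^{\ln((n+2)^m)}$). However, your bookkeeping has a genuine gap. You write that ``the batch of $J'$-intervals needed to cover $M$ at precision level $k$ (finitely many!) fits in a tiny initial segment of $B_k$'' --- but an $\mathcal{M}_{\ln}$-cover of $M$ is an infinite sequence of intervals, and nothing guarantees a finite subcover, so the batches are not finite. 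Your density estimate $\textrm{card}(E\cap(j+1))\leq\sum_{i\leq k}(\text{batch size}_i)=o(N_k)$ rests entirely on this false finiteness. Moreover, you never specify an injective map $n\mapsto e_n$: the scheme you sketch --- placing $J'_n$ with $n\geq N_k$ into $B_k$ --- has infinitely many source indices and only $\textrm{card}(B_k\setminus D)$ slots, so it cannot be carried out as described.

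The paper's proof makes the injective assignment explicit and dispenses with blocks entirely. From $d(D)=0$ fix $k$ so that $\textrm{card}(D\cap(j+1))/(j+1)\leq\tfrac14$ for all $j>k$, then choose $m\geq 2$ with $2^m>k$. Take a single cover $(I_n)_{n\in\omega}$ of $M$ with $|I_n|\leq(\varepsilon^m)^{\ln(n+2)}=\varepsilon^{\ln((n+2)^m)}$. Greedily pick $t_n\in\omega\setminus(D\cup\{t_0,\dots,t_{n-1}\})$ maximal with $t_n+2\leq(n+2)^m$. The $\tfrac14$ bound on $D$ forces $t_n+2\geq\tfrac12(n+2)^m$, so $\textrm{card}(E\cap(j+1))\lesssim j^{1/m}$ and $d(E)=0$; while $t_n+2\leq(n+2)^m$ and $\varepsilon<1$ give $|I_n|\leq\varepsilon^{\ln(t_n+2)}$. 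This single greedy choice is precisely the step your plan leaves unresolved; I recommend you abandon the block decomposition and instead give a direct, monotone, injective recipe $n\mapsto t_n$ of polynomial growth.
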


\begin{proof}
Take any $(J_d)_{d\in D}$ such that $d(D)=0$ and $|J_d|\leq\varepsilon^{\ln (d+2)}$ for all $d\in D$. Since $d(D)=0$, there is $k\in\omega$ such that $\frac{\textrm{card}(D\cap (j+1))}{j+1}\leq\frac{1}{4}$ for all $j>k$. Find $m\in\omega$ such that $2^m>k$ and $m\geq 2$. We inductively pick a sequence $(t_i)_{i\in\omega}$ of pairwise distinct elements of $\omega\setminus D$ satisfying $\frac{1}{2}(i+2)^m\leq t_i+2\leq (i+2)^m$. 

The construction is as follows. Let $t_0\in\omega\setminus D$ be maximal such that $t_0+2\leq 2^m$. Note that at most one in four of all $n\leq 2^m$ is in $D$, hence $t_0+2\geq \frac{1}{2}2^m$. Assume now that $t_0,\ldots,t_{i-1}$ are constructed. Pick $t_i\in \omega\setminus (D\cup\{t_0,\ldots,t_{i-1}\})$ to be maximal such that $t_i+2\leq (i+2)^m$. Note that at most one in four of all $n\leq (i+2)^m$ is in $D$. Moreover, until this moment we have picked only $i$ numbers from $\omega\setminus D$ and $\frac{i}{(i+2)^m}<\frac{1}{4}$, so less than one in four of all $n\leq (i+2)^m$ is one of the $t_j$'s for $j<i$. Therefore, $\frac{1}{2}(i+2)^m\leq t_i+2$. 

Define $E=\{t_i:i\in\omega\}$. Obviously, $D\cap E=\emptyset$. What is more, $d(E)=0$. Indeed, given any $j\in\omega$, the number of elements of the set $E\cap (j+1)$ is bounded above by $(2(j+2))^{\frac{1}{m}}-1$, since $j<\frac{1}{2}(i+2)^m-2\leq t_i$ whenever $i>(2(j+2))^{\frac{1}{m}}-2$. Now it suffices to observe that:
$$\frac{\textrm{card}(E\cap (j+1))}{j+1}\leq\frac{(2(j+2))^{\frac{1}{m}}-1}{j+1}\rightarrow 0.$$

Since the set $M$ is in $\mathcal{M}_{\ln}$, there is a sequence of intervals $(I_n)_{n\in\omega}$ covering $M$ and such that $|I_n|\leq (\varepsilon^m)^{\ln (n+2)}=\varepsilon^{\ln (n+2)^m}$. Let $J_{t_n}=I_n$ for all $n\in\omega$ and note that for all $n\in\omega$ we have $|I_n|\leq\varepsilon^{\ln (t_n+2)}$, since $t_n+2\leq (n+2)^m$ and $\varepsilon\in (0,1)$.  
\end{proof}

\begin{proposition}
Both $\mathcal{M}_{\ln}$ and $\mathcal{M}'$ are $\sigma$-ideals. 
\end{proposition}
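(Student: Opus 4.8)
The plan is to verify the two defining properties of a $\sigma$-ideal for each of $\mathcal{M}_{\ln}$ and $\mathcal{M}'$: that they are downward closed (hereditary), and closed under countable unions. Downward closure is immediate in both cases: if $M'\subset M$ and a covering sequence of intervals works for $M$, it automatically covers $M'$, with the same length bounds. (One should also note $\emptyset$ and singletons belong to both families, so they are proper non-trivial ideals.) So the entire content is in countable additivity, and the two cases are handled by slightly different devices.

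For $\mathcal{M}'$, I would argue as follows. Suppose $M=\bigcup_{k\in\omega}M_k$ with each $M_k\in\mathcal{M}'$, and fix $\varepsilon>0$. The natural idea is to cover $M_k$ using a density-zero index set $D_k$ with intervals of length $\le(\varepsilon')^{d+1}$ for a suitably small $\varepsilon'$, and then amalgamate. The obstacle is that a countable union of density-zero sets need not be density zero, so one cannot simply take $\bigcup_k D_k$ as the new index set. The fix is to use the extra room in the exponent, exactly as in Remark~\ref{rem}: cover $M_k$ with intervals indexed by a density-zero set but of length bounded by $(\varepsilon^{N_k})^{d+1}$ for rapidly growing $N_k$, then reindex so that the $k$-th block of indices is ``stretched out'' into a very sparse arithmetic-progression-like subset of $\omega$, sparse enough that the union over all $k$ still has density zero (the tail contribution of block $k$ can be made $\le 2^{-k}$ at every scale). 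The length condition $|J_n|\le\varepsilon^{n+1}$ is preserved because stretching an index $d$ to a much larger index $n$ only makes the required bound $\varepsilon^{n+1}$ weaker while we already had the much stronger bound $(\varepsilon^{N_k})^{d+1}$. This is essentially the ``spacing out'' trick of Remark~\ref{rem} iterated with care on densities; I would present it via an explicit reindexing function.

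For $\mathcal{M}_{\ln}$, the cleanest route uses Lemma~\ref{ln-lemma}. Again take $M=\bigcup_{k\in\omega}M_k$ with each $M_k\in\mathcal{M}_{\ln}$ and fix $\varepsilon>0$. Build the covering sequences inductively: having already used up a density-zero index set $D$ on $M_0,\dots,M_{k-1}$ (with intervals of length $\le\varepsilon^{\ln(d+2)}$), apply Lemma~\ref{ln-lemma} to $M_k$ to obtain a further index set $E_k$ disjoint from $D$, of density zero, carrying a cover of $M_k$ with the right length bound; then replace $D$ by $D\cup E_k$. The only subtlety is that we need the limiting union $\bigcup_k E_k$ to still have density zero, which does not follow from Lemma~\ref{ln-lemma} applied naively. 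So instead of literally iterating the lemma, I would re-run its proof once, simultaneously for all $M_k$: choose for each $k$ a covering sequence $(I^k_n)_{n\in\omega}$ of $M_k$ with $|I^k_n|\le(\varepsilon^{m_k})^{\ln(n+2)}$ for a rapidly increasing sequence $m_k$, then allocate to $M_k$ the indices $t$ with $t+2$ lying in a very thin window near $(n+2)^{m_k}$, chosen so that the combined image over all $k$ is still asymptotically density zero — the count of such indices below $j$ is $O(j^{1/m_k})$ summed over $k$ with $(j+2)^{m_k}\lesssim j$, which is a finite sum growing like $o(j)$. As in Lemma~\ref{ln-lemma}, the bound $(\varepsilon^{m_k})^{\ln(n+2)}=\varepsilon^{\ln(n+2)^{m_k}}\le\varepsilon^{\ln(t+2)}$ holds because $t+2\le(n+2)^{m_k}$.

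The main obstacle in both parts is the same: a countable union of density-zero (resp. asymptotic-density-zero) subsets of $\omega$ is generally not of density zero, so the reindexing must be done with quantitative control on how much each piece contributes at each scale $j$. The resolution is to trade the slack we have in the length exponents (which we are free to make much stronger than required, by passing from $\varepsilon$ to $\varepsilon^{m_k}$) for extreme sparsity of the $k$-th block of indices, arranged so the tails sum to something $o(j)$. Once this bookkeeping is set up, both verifications are routine.
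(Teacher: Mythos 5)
Your argument for $\mathcal{M}'$ is essentially the paper's. The paper reindexes the density-zero cover of $M_k$ (obtained with $\varepsilon$ replaced by $\varepsilon^{2^{k+1}}$) into a subset of $2^{k+1}\cdot(\omega+1)$, then shifts by $2^k$ so the $k$-th block lands among the odd multiples of $2^k$ — this makes the blocks pairwise disjoint automatically — and verifies density zero of the union by splitting it into a finite union of density-zero sets plus a tail $\bigcup_{k>m}\bigl(2^{k+1}\cdot(\omega+1)-2^k\bigr)$, which has density $2^{-(m+1)}$. That is exactly your ``stretch the $k$-th block into a sparse arithmetic-progression-like set so its tail contributes $\le 2^{-k}$ at every scale'' idea, so this part is fine.

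For $\mathcal{M}_{\ln}$ you have misread the definition: a set lies in $\mathcal{M}_{\ln}$ if for each $\varepsilon>0$ there is a sequence $(J_n)_{n\in\omega}$ covering it with $|J_n|\le\varepsilon^{\ln(n+2)}$, and there is \emph{no} density constraint on the index set (the density-zero requirement is what distinguishes $\mathcal{M}'$, not $\mathcal{M}_{\ln}$). So the ``only subtlety'' you raise — that $\bigcup_k E_k$ must have density zero — is a non-issue, and the naive iteration you dismiss is in fact the paper's proof: at step $n$ apply Lemma~\ref{ln-lemma} with $D=E_0\cup\cdots\cup E_{n-1}$, which is density zero because it is a \emph{finite} union of density-zero sets. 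The limiting set $E=\bigcup_n E_n$ may well have positive density; that does not matter, since one simply pads with empty intervals off $E$ to obtain an $\omega$-indexed cover with $|J_n|\le\varepsilon^{\ln(n+2)}$. The density-zero output of Lemma~\ref{ln-lemma} is used only to keep the iteration alive, not in the final conclusion. Your ``simultaneous re-run'' of the lemma's proof is correct and even yields the stronger fact that $\bigcup_n M_n$ has a density-zero $\ln$-cover, but it is unnecessary bookkeeping for the statement at hand, and the misreading that prompted it is worth correcting.
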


\begin{proof}
Firstly, assume that $(M_k)_{k\in\omega}\subset\mathcal{M}'$ and set any $\varepsilon>0$. Similarly as in Remark \ref{rem}, for each $k\in\omega$ we can find a sequence of intervals $(J_d)_{d\in D_k}$, where $D_k\subset 2^{k+1}\cdot(\omega+1)$, which covers $M_k$ and satisfies $|J_d|\leq \varepsilon^{d+1}$ for each $d\in D_k$. Let $D'_k=D_k-2^k$ and note that $(D'_k)_{k\in\omega}$ is a family of pairwise disjoint subsets of $\omega$. Let also $J'_d=J_{d+2^k}$ whenever $d\in D'_k$ and define $D=\bigcup_{k\in\omega}D'_k$. Then $(J'_n)_{n\in D}$ covers $\bigcup_{k\in\omega}M_k$ and $|J'_n|\leq\varepsilon^{n+1}$ for each $n\in\omega$. 

We need to show that $D$ is of asymptotic density zero. Set any $\delta>0$. There is $m\in\omega$ such that $D^0=\bigcup_{k>m}(2^{k+1}\cdot(\omega+1)-2^k)$ has asymptotic density less than $\frac{\delta}{3}$. Hence, there is $j_0\in\omega$ such that for all $j>j_0$ we have $\frac{\textrm{card}(D^0\cap (j+1))}{j+1}<\frac{\delta}{2}$. Denote $D^1=\bigcup_{k\leq m}D'_k$ and note that this set is of asymptotic density zero. Hence, there also is $j_1\in\omega$ such that for all $j>j_0$ we have $\frac{\textrm{card}(D^1\cap (j+1))}{j+1}<\frac{\delta}{2}$. Since $D\subset D^0\cup D^1$, we have:
$$\frac{\textrm{card}(D\cap (j+1))}{j+1}\leq\frac{\textrm{card}(D^0\cup D^1\cap (j+1))}{j+1}<\delta$$
whenever $j>\max\{j_0,j_1\}$.

Assume now that $(M_k)_{k\in\omega}\subset\mathcal{M}_{\ln}$ and set any $\varepsilon>0$. By Lemma \ref{ln-lemma} (applied to $D=\emptyset$), there are $E_0\subset\omega$ of asymptotic density zero and a sequence of intervals $(J_e)_{e\in E_0}$ covering $M_0$ and such that $|J_e|\leq \varepsilon^{\ln (e+2)}$ for each $e\in E_0$. However, by Lemma \ref{ln-lemma} there also are $E_1\subset\omega$ disjoint with $E_0$ of asymptotic density zero and a sequence of intervals $(J_e)_{e\in E_1}$ covering $M_1$ and such that $|J_e|\leq \varepsilon^{\ln (e+2)}$ for each $e\in E_1$. In this way we inductively construct a family $(E_n)_{n\in\omega}$ of pairwise disjoint subsets of $\omega$ and a sequence of intervals $(J_e)_{e\in E}$, where $E=\bigcup_{n\in\omega}E_n$, covering $\bigcup_{n\in\omega}M_n$ and such that $|J_e|\leq \varepsilon^{\ln (e+2)}$ for each $e\in E$. 
\end{proof}

Now we will calculate additivity of the ideals $\mathcal{M}_{\ln}$ and $\mathcal{M}'$ under Martin's axiom.

\begin{proposition}
\label{ln}
Assume MA$_\kappa$. If $\mathcal{F}\subset\mathcal{M}_{\ln}$ is a family of cardinality $\kappa$, then $\bigcup\mathcal{F}\in\mathcal{M}_{\ln}$. Therefore, $\texttt{add}\left(\mathcal{M}_{\ln}\right)=2^\omega$ under Martin's axiom.
\end{proposition}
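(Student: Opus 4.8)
The plan is to deduce the displayed conclusion from the first assertion. Since $\mathcal{M}_{\ln}$ is $\mathbf{G_\delta}$-generated and contains all singletons, $\texttt{add}(\mathcal{M}_{\ln})\le 2^{\omega}$ holds in ZFC, and Martin's axiom is precisely the conjunction of MA$_\kappa$ over all $\kappa<2^{\omega}$; so once we show, under MA$_\kappa$, that the union of any $\kappa$ sets from $\mathcal{M}_{\ln}$ is again in $\mathcal{M}_{\ln}$, we get $\texttt{add}(\mathcal{M}_{\ln})\ge 2^{\omega}$, hence equality. For the first assertion, fix $\kappa$, assume MA$_\kappa$, let $\mathcal{F}=\{M_\alpha:\alpha<\kappa\}\subseteq\mathcal{M}_{\ln}$ and fix $\varepsilon\in(0,1)$; I want to produce a sequence of intervals $(J_n)_{n\in\omega}$ with $|J_n|\le\varepsilon^{\ln(n+2)}$ covering $\bigcup\mathcal{F}$, and I will obtain it as a generic object for a ccc (in fact $\sigma$-centered) forcing $\mathbb{P}$.

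A condition of $\mathbb{P}$ will consist of a finite partial function $s$ from $\omega$ into the closed rational-endpoint intervals with $|s(n)|\le\varepsilon^{\ln(n+2)}$, a finite set $F\subseteq\kappa$ of ``activated'' indices, and, for each $\alpha\in F$, a finite record of which intervals of a cover attached to $M_\alpha$ have been placed by $s$ and at which positions. The cover attached to $\alpha$ is chosen when $\alpha$ enters $F$: using $M_\alpha\in\mathcal{M}_{\ln}$ together with Lemma~\ref{ln-lemma} (iterated), I choose a cover of $M_\alpha$ indexed by a set of asymptotic density zero which avoids the finitely many positions occupied so far and is sparse enough to be interleavable with all covers chosen later. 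The order is coherent end-extension — $s$ grows, $F$ grows, old commitments and placements are preserved — and I insist that every position in the domain of $s$ be serving some activated $\alpha$ through its attached cover, so that the occupied positions below any bound stay sparse. Filling the unused positions with degenerate intervals, a filter meeting the dense sets ``$n$ is in the domain of $s$'' ($n\in\omega$), ``$\alpha\in F$'' ($\alpha<\kappa$), and ``the $k$-th interval of the cover attached to $\alpha$ is placed'' ($\alpha<\kappa$, $k\in\omega$) yields the desired $(J_n)_{n\in\omega}$; there are only $\kappa$ such dense sets, so MA$_\kappa$ applies as soon as $\mathbb{P}$ is shown to be ccc.

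The two things that then have to be checked are density of those sets and the chain condition. For density the only nontrivial point is extending a condition to place one more interval of $M_\alpha$'s attached cover, and this is exactly where the logarithmic growth of $\mathcal{M}_{\ln}$ enters: Lemma~\ref{ln-lemma}, together with the sparsity bookkeeping from Section~\ref{sec:algorithm}, guarantees that no matter which finitely many positions are occupied, an admissible free position remains — the room that $\mathcal{M}_{\ln}$ has and $\mathcal{M}$ does not, the latter failure being the content of Theorem~\ref{add}. For the chain condition I would run a $\Delta$-system argument: given uncountably many conditions, thin so that the functions $s$ coincide, the sets $F$ form a $\Delta$-system along whose root all attached data agree, and the combinatorial shapes of the placement records coincide; then the sparsity built into the attached covers should let any two conditions of the thinned family be amalgamated. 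The hard part will be arranging $\mathbb{P}$ so that these two demands are compatible — the attached covers must be sparse enough that finitely many (and, across a generic filter, $\kappa$ many) of them coexist without exhausting the low positions, yet structured rigidly enough to be merged pairwise — and this balancing, quantified through the estimates of the Spacing Algorithm, is where the real work of the proof lies.
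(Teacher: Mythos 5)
Your forcing $\mathbb{P}$ is not the one the paper uses, and as you write it, it has a real gap that the paper's choice is specifically designed to avoid. The paper's conditions are \emph{entire} covers: elements of $\mathcal{P}$ are sequences $(J_d)_{d\in D}$ with $d(D)=0$, $J_d$ a rational-endpoint interval of length $\le\varepsilon^{\ln(d+2)}$, ordered by $\bigcup_d J_d\supset\bigcup_d J'_d$. Density of $\mathcal{D}_\alpha$ is then immediate from Lemma~\ref{ln-lemma}: given a condition indexed by some density-zero $D$, the lemma manufactures a disjoint density-zero $E$ and a cover of $M_\alpha$ on $E$ obeying the length bound, and the union of the two is again a condition. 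The ccc is imported wholesale from Carlson's argument for $\mathcal{S}$, and nothing like a $\Delta$-system is needed. So the whole structure of your poset --- finite partial functions, activation sets $F\subseteq\kappa$, dynamically attached covers, placement records --- is reinvented rather than taken from the paper, and the burden of proving ccc falls entirely on you.

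That is where your sketch breaks. You choose the cover attached to $\alpha$ at the moment $\alpha$ enters $F$, depending on which positions are already occupied, so two conditions with $\alpha$ in both their $F$'s may carry genuinely different (infinite) covers for $\alpha$. In your $\Delta$-system step you say to ``thin so that \dots along whose root all attached data agree,'' but the attached data per root element ranges over $2^\omega$ possibilities; starting from $\omega_1$ conditions there is no pigeonhole that guarantees an uncountable subfamily agreeing on it. The parenthetical claim that $\mathbb{P}$ is $\sigma$-centered is likewise unsupported: conditions with the same finite data but conflicting attached infinite covers have no obvious common extension. You also lean on ``the sparsity bookkeeping from Section~\ref{sec:algorithm},'' but that section (the Spacing Algorithm) is the machinery for the \emph{lower} bound results --- building a microscopic set that resists sparse covers and showing $\texttt{add}(\mathcal{M})=\omega_1$ --- not for interleaving covers; the relevant tool here is Lemma~\ref{ln-lemma} alone. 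A correct repair would either fix a canonical family of mutually interleavable covers per $\alpha$ in advance (removing the dynamic choice), or, more simply, replace your finite-condition forcing with the paper's cover-as-condition poset, where ccc is structural and no amalgamation lemma is needed.
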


The proof is an adaptation of the proof of \cite[Theorem 2.1]{smz}. Therefore, we omit some details and focus only on the modified parts.

\begin{proof}
Let $\mathcal{F}=\{M_\alpha:\alpha<\kappa\}$ and $\mathcal{B}$ be the family of all open intervals with rational endpoints. Notice that $\mathcal{B}$ is countable. Denote $M=\bigcup_{\alpha<\kappa}M_\alpha$ and take any $\varepsilon\in (0,1)$. Let 
$$\mathcal{P}=\left\{(J_d)_{d\in D}:d(D)=0\textrm{ and }\forall_{d\in D}\left(J_d\in\mathcal{B}\textrm{ and }|J_d|\leq\varepsilon^{\ln (d+2)}\right)\right\}$$
and define the relation $\prec$ on $\mathcal{P}$ by:
$$(J_d)_{d\in D}\prec (J'_d)_{d\in D'}\Leftrightarrow \bigcup_{d\in D}J_d\supset\bigcup_{d\in D'}J'_d.$$
Then $(\mathcal{P},\prec)$ is a partial order which is c.c.c. (for details see \cite{smz}). For all $\alpha<\kappa$ define also
$$\mathcal{D}_\alpha=\left\{(J_n)_{n\in D}\in\mathcal{P}:M_\alpha\subset\bigcup_{n\in D}J_n\right\}.$$
We want to prove that these sets are dense.

Take any $\alpha<\kappa$. We will show that $\mathcal{D}_\alpha$ is dense. Suppose that $(J_d)_{d\in D}\in\mathcal{P}$. By Lemma \ref{ln-lemma}, there is $E\subset\omega$ disjoint with $D$ and of asymptotic density zero and a sequence of intervals $(J_e)_{e\in E}$ covering $M$ and such that $|J_e|\leq \varepsilon^{\ln (e+2)}$ for each $e\in E$. Observe that the sequence $(J_n)_{n\in D\cup E}$ is in $\mathcal{D}_\alpha$ and $(J_n)_{n\in D\cup E}\prec (J_n)_{n\in D}$. Therefore, the set $\mathcal{D}_\alpha$ is dense.

By MA$_\kappa$, there is a filter $\mathcal{G}$ in $\mathcal{P}$ intersecting each $\mathcal{D}_\alpha$ for $\alpha<\kappa$. Let also $I_0,I_1,\ldots$ list all the intervals $J$ such that there are $(J_d)_{d\in D}\in\mathcal{G}$ and $d\in D$ with $J=J_d$ (note here that each $J_d$ is in $\mathcal{B}$ and recall that $\mathcal{B}$ is countable). Then the union $\bigcup_{n\in \omega}I_n$ covers the set $M$, since each $M_\alpha$ is contained in some $\bigcup_{d\in D}J_d$ with $(J_d)_{d\in D}\in\mathcal{G}$. Moreover, $|I_n|\leq \varepsilon^{\ln (n+2)}$ for all $n\in\omega$ (for details see \cite{smz}). Hence, the set $M$ is in $\mathcal{M}_{\ln}$.
\end{proof}

\begin{proposition}
Assume MA$_\kappa$. If $\mathcal{F}\subset\mathcal{M}'$ is a family of cardinality $\kappa$, then $\bigcup\mathcal{F}\in\mathcal{M}'$. Therefore, $\texttt{add}\left(\mathcal{M}'\right)=2^\omega$ under Martin's axiom.
\end{proposition}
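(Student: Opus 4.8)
The proof is to be an adaptation of the proof of Proposition \ref{ln} (hence, ultimately, of \cite[Theorem 2.1]{smz}), so I only indicate the modifications. Write $\mathcal{F}=\{M_\alpha:\alpha<\kappa\}$, let $\mathcal{B}$ be the (countable) family of open intervals with rational endpoints, put $M=\bigcup_{\alpha<\kappa}M_\alpha$ and fix $\varepsilon\in(0,1)$. The relevant forcing is $(\mathcal{P}',\prec)$, where
$$\mathcal{P}'=\left\{(J_d)_{d\in D}:d(D)=0\ \wedge\ \forall_{d\in D}\left(J_d\in\mathcal{B}\ \wedge\ |J_d|\leq\varepsilon^{d+1}\right)\right\}$$
and $(J_d)_{d\in D}\prec(J'_d)_{d\in D'}$ iff $\bigcup_{d\in D}J_d\supseteq\bigcup_{d\in D'}J'_d$; that this is a c.c.c.\ partial order is checked exactly as in \cite{smz}. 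For $\alpha<\kappa$ put $\mathcal{D}_\alpha=\{(J_n)_{n\in D}\in\mathcal{P}':M_\alpha\subseteq\bigcup_{n\in D}J_n\}$. Granting that each $\mathcal{D}_\alpha$ is dense, MA$_\kappa$ yields a filter $\mathcal{G}$ meeting all of them; then, listing the intervals occurring in the conditions of $\mathcal{G}$ (only countably many, as they all lie in $\mathcal{B}$) and re-indexing, one obtains --- exactly as in \cite{smz} and in the proof of Proposition \ref{ln} --- a sequence of intervals covering $M$, indexed by a set of asymptotic density zero, whose $n$-th term has length at most $\varepsilon^{n+1}$; i.e.\ $M\in\mathcal{M}'$.

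So the one genuinely new ingredient is the analogue of Lemma \ref{ln-lemma} needed for the density of $\mathcal{D}_\alpha$: \emph{if $M\in\mathcal{M}'$, $\varepsilon\in(0,1)$, and $(J_d)_{d\in D}$ satisfies $d(D)=0$ and $|J_d|\leq\varepsilon^{d+1}$ for all $d\in D$, then there are a set $E\subseteq\omega$ disjoint from $D$ with $d(E)=0$ and a sequence of intervals $(J_e)_{e\in E}$ covering $M$ with $|J_e|\leq\varepsilon^{e+1}$ for all $e\in E$.} I would prove this as follows. Let $(e_j)_{j\in\omega}$ be the increasing enumeration of $\omega\setminus D$; since $d(D)=0$ there is $J_0\in\omega$ with $e_j\leq 2j+1$ for all $j\geq J_0$. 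Next, using that $M\in\mathcal{M}'$: apply the definition of $\mathcal{M}'$ with a suitably small base $\varepsilon^{c}$ to get a set $D^\ast$ of asymptotic density zero and a cover $(\hat I_b)_{b\in D^\ast}$ of $M$ with $|\hat I_b|\leq\varepsilon^{c(b+1)}$, and then re-place $\hat I_b$ at index $(J_0+1)(b+1)$; for $c$ large enough (as in Remark \ref{rem}) this produces a set $\widetilde D$ of asymptotic density zero with $\min\widetilde D>J_0$ and a cover $(\widetilde I_b)_{b\in\widetilde D}$ of $M$ with $|\widetilde I_b|\leq\varepsilon^{2(b+1)}$ for all $b\in\widetilde D$. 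Finally put $E=\{e_b:b\in\widetilde D\}$ and $J_{e_b}=\widetilde I_b$. Then $E\subseteq\omega\setminus D$, the map $b\mapsto e_b$ is injective, and $(J_e)_{e\in E}$ covers $M$; moreover $b\leq e_b$ gives $\textrm{card}(E\cap(j+1))\leq\textrm{card}(\widetilde D\cap(j+1))$ for all $j$, so $d(E)=0$, while $b>J_0$ gives $e_b\leq 2b+1$, whence $|\widetilde I_b|\leq\varepsilon^{2(b+1)}=\varepsilon^{2b+2}\leq\varepsilon^{e_b+1}$. (As usual one makes the bound on $|\widetilde I_b|$ slightly stronger so that each $J_{e_b}$ may be taken inside $\mathcal{B}$; this only costs a further harmless enlargement of $c$.)

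With the lemma in hand, density of $\mathcal{D}_\alpha$ is immediate: given $(J_d)_{d\in D}\in\mathcal{P}'$, apply the lemma to $M_\alpha$ to obtain $E$ and $(J_e)_{e\in E}$; then $D\cup E$ still has asymptotic density zero, so $(J_n)_{n\in D\cup E}\in\mathcal{P}'$, it belongs to $\mathcal{D}_\alpha$, and it refines $(J_d)_{d\in D}$. I expect this lemma to be the only real obstacle: one must squeeze a cover of $M_\alpha$ into the complement of $D$ while keeping the index set of asymptotic density zero \emph{and} respecting the size constraints $|J_e|\leq\varepsilon^{e+1}$ after the unavoidable re-indexing. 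This is precisely where $M_\alpha\in\mathcal{M}'$ --- rather than merely $M_\alpha\in\mathcal{M}$ --- is used: it allows a cover of $M_\alpha$ to be placed on an index set $\widetilde D$ as sparse as we wish, in particular with arbitrarily large minimum, which is what renders the substitution $b\mapsto e_b$ harmless (since $e_b\leq 2b+1$ once $b>J_0$). For a fixed base, a full $\omega$-indexed cover of $M_\alpha$ would force the new indices to grow only linearly, so the resulting index set would have positive lower density and the argument would break down.
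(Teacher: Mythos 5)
Your proof is correct and follows essentially the same route as the paper: the Carlson-style c.c.c.\ forcing with the poset of density-zero-indexed covers, with all the work concentrated in showing the sets $\mathcal{D}_\alpha$ are dense, which in turn reduces to a ``squeeze a cover of $M_\alpha$ into $\omega\setminus D$'' lemma. The only (minor, purely technical) divergence is in how that squeeze is executed: the paper fixes $m$ with $\textrm{card}(D\cap(j+1))/(j+1)<\tfrac14$ for $j\geq m$, takes a cover $(I_e)_{e\in E}$ of $M_\alpha$ with $|I_e|\leq\varepsilon^{m(e+1)}$, and then greedily picks $t_i\in\omega\setminus D$ maximal with $t_i+1\leq m(e_i+1)$, using the density bound to guarantee $t_i+1\geq\tfrac12 m(e_i+1)$ and hence $d(\{t_i\})=0$; you instead enumerate $\omega\setminus D$ as $(e_j)_j$, observe that $e_j\leq 2j+1$ eventually, push a cover onto a sparse $\widetilde D$ avoiding the small indices, and compose with the enumeration $b\mapsto e_b$. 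Both accomplish exactly the same re-indexing with the same controls (positivity of the gap in $\omega\setminus D$, linear growth of the new indices, preservation of density zero), so this is an equivalent execution of the paper's argument rather than a genuinely different one.
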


This proof also is based on the proof of \cite[Theorem 2.1]{smz} and is very similar to the proof of Proposition \ref{ln}.

\begin{proof}
Let $\mathcal{F}=\{M_\alpha:\alpha<\kappa\}$ and denote $M=\bigcup_{\alpha<\kappa}M_\alpha$. Similarly as in the proof of Proposition \ref{ln}, let $\mathcal{B}$ be the family of all open intervals with rational endpoints and take any $\varepsilon\in (0,1)$. Let 
$$\mathcal{P}=\left\{(J_d)_{d\in D}:d(D)=0\textrm{ and }\forall_{d\in D}\left(J_d\in\mathcal{B}\textrm{ and }|J_d|\leq\varepsilon^{d+1}\right)\right\}$$
and define the relation $\prec$ on $\mathcal{P}$ by:
$$(J_d)_{d\in D}\prec (J'_d)_{d\in D'}\Leftrightarrow \bigcup_{d\in D}J_d\supset\bigcup_{d\in D'}J'_d.$$
Then, as in the proof of Proposition \ref{ln}, $(\mathcal{P},\prec)$ is a partial order which is c.c.c. For all $\alpha<\kappa$ define also
$$\mathcal{D}_\alpha=\left\{(J_d)_{d\in D}\in\mathcal{P}:M_\alpha\subset\bigcup_{d\in D}J_d\right\}.$$
Now it suffices to prove that these sets are dense. 

Take any $\alpha<\kappa$. We want to show that $\mathcal{D}_\alpha$ is dense. Suppose that $(J_d)_{d\in D}\in\mathcal{P}$. Since $d(D)=0$, there is $m\in\omega$ such that $\frac{\textrm{card}(D\cap (j+1))}{j+1}<\frac{1}{4}$ for all $j\geq m$. We can additionally assume that $m\geq 4$ and $m$ is even. Since the set $M_\alpha$ is in $\mathcal{M}'$, there is $E\subset\omega$ of density zero and a sequence of intervals $(I_e)_{e\in E}$ covering $M_\alpha$ and such that $|I_e|\leq\left(\varepsilon^m\right)^{e+1}=\varepsilon^{m(e+1)}$ for all $e\in E$. Let $\{e_0,e_1,\ldots\}$ be an increasing enumeration of the set $E$. We inductively pick a sequence $(t_i)_{i\in\omega}$ of pairwise distinct elements of $\omega\setminus D$ satisfying $\frac{1}{2}m(e_i+1)\leq t_i+1\leq m(e_i+1)$.

The construction is as follows. Let $t_0\in\omega\setminus D$ be maximal such that $t_0+1\leq m(e_0+1)$. Note that at most one in four of all $n\leq m(e_0+1)$ is in $D$, and hence $t_0+1\geq \frac{1}{2}m(e_0+1)$. Assume now that $t_0,\ldots,t_{i-1}$ are constructed. Pick $t_i\in \omega\setminus (D\cup\{t_0,\ldots,t_{i-1}\})$ to be maximal such that $t_i+1\leq m(e_i+1)$. Note that at most one in four of all $n\leq m(e_i+1)$ is in $D$. Moreover, until this moment we have picked only $i$ numbers from $\omega\setminus D$ and by the fact that $m\geq 4$, we have $\frac{i}{m(e_i+1)}\leq\frac{e_i+1}{m(e_i+1)}<\frac{1}{4}$. Hence, less than one in four of all $n\leq m(e_i+1)$ is one of the $t_j$'s for $j<i$. Therefore, $\frac{1}{2}m(e_i+1)\leq t_i+1$.

Define $F=\{t_i:i\in \omega\}$. Obviously, $D\cap F=\emptyset$. What is more, $d(F)=0$. Indeed, given any $j\in\omega$, the number of elements of the set $F\cap (j+1)$ is bounded above by the cardinality of the set $\{i\in\omega:e_i\leq\frac{2(j+1)}{m}-1\}=E\cap\frac{2(j+1)}{m}$, since $j+1<\frac{1}{2}m(e_i+1)\leq t_i+1$ whenever $e_i>\frac{2(j+1)}{m}-1$. Now it suffices to observe that:
$$\frac{\textrm{card}(F\cap (j+1))}{j+1}\leq\frac{\textrm{card}(E\cap\frac{2(j+1)}{m})}{j+1}\leq\frac{\textrm{card}(E\cap (j+1))}{j+1}\rightarrow 0,$$
since $E$ is of asymptotic density zero.

Let $J_{t_i}=I_{e_i}$ for all $i\in \omega$ and note that we have $|J_{t_i}|\leq\varepsilon^{(t_i+1)}$, since $t_i+1\leq m(e_i+1)$. Then the sequence $(J_{n})_{n\in D\cup F}$ is in $\mathcal{D}_\alpha$ and $(J_n)_{n\in D\cup F}\prec(J_n)_{n\in D}$. Therefore, the set $\mathcal{D}_\alpha$ is dense.

The rest of the proof is similar to the proof of Proposition \ref{ln}.
\end{proof}

\end{document}